\begin{document}
% \title{Symmetries of quantization of finite groupoids}
\title{Integral transforms for finite gauge theories}
\author{Jackson Van Dyke}
\date{\today}
\begin{abstract}
This paper shows that quantization of $\pi$-finite spaces, as a functor out of a
higher category of spans, is equivariant in two ways:
Symmetries of a given polarization/Lagrangian always induce
coherent symmetries of the quantization. 
On the other hand, symmetries of the entire phase space a priori only induce projective
symmetries, with an invertible once-categorified theory, the anomaly theory, encoding the
projectivity.

We give projective symmetries of three-dimensional finite gauge theories a
concrete description via a twice-categorified analogue of Blattner-Kostant-Sternberg
kernels and the associated integral transforms, such as the Fourier transform. 
This establishes an analogy between certain instances of the $\pi$-finite quantization
procedure considered herein and the geometric quantization of a symplectic vector space. 
\end{abstract}
\maketitle
\tableofcontents

\section{Introduction}

% Fakesubsection INTRO

Geometric quantization relates (semi)classical states to quantum ones.
Namely, the geometric quantization of a polarized symplectic vector space $V$ consists of some sort of
``linearization of half of the variables''.
For instance, given a polarization $V \simeq T^* \ell$, one model for the Hilbert space
of states is $L^2\left(\ell\right)$.
This space of states has the curious property that, even though it is most
naturally thought of as associated to the symplectic vector space $V$, it is only
\emph{projectively} equivariant for the natural symplectic group of symmetries.

The traditional way of identifying this projectivity is by constructing a natural integral
transform $T_g$ associated to each symplectic group element $g$, using what are called
Blattner-Kostant-Sternberg (BKS) kernels \cite{LV:Weil,Pol:abelian,BW:quant}.
E.g. when $g$ is the symplectic matrix sending the position coordinates in $V$ to the
momentum coordinates, the associated integral transform is the \emph{Fourier transform}.
The integral transforms don't quite compose coherently:
\begin{equation*}
c\left(g,h\right) = T_{gh}^{-1} \comp g T_h \comp T_g
\end{equation*}
for some scalar $c\left(g, h\right) \in \U\left(1\right)$. 
This defines a $2$-cocycle, and the extension classified by 
$\left[c\right] \in H^2\left(B\Sp\left(T^* \ell\right)\right)$ is essentially the metaplectic
group.

In summary: Geometric quantization is only \emph{projectively} a functor out of the
appropriate category of phase spaces. 
On the other hand, it is manifestly (linearly) functorial out of \emph{polarized} phase
spaces. 

We study an analogue of this projective (resp. linear) equivariance with respect to automorphisms
of the full phase space (resp. of the Lagrangian) in the setting of quantization of
$\pi$-finite groupoids in dimension three
\cite{K:finite,Q:finite,T:finite,F:higher,FHLT,FMT}.
Since a full treatment has not appeared in the literature, we assume a certain
\Cref{hyp:sigma} (and the cobordism hypothesis) which enables us to consider the 
\emph{quantization} of a $\pi$-finite space, possibly decorated by a cocycle $\tau$
representing a class in $H^d\left(X\right)$, which is a $d$-dimensional TQFT 
$\sigma_{X ,\tau}^d$.
For example, when $X$ is the classifying space of a finite group we obtain Dijkgraaf-Witten
theory.

Under the analogy considered herein, 
the analogue of the symplectic vector space $V = T^* \ell$ is $A = L\dsum \pdual{L}$, with
``Lagrangian'' the finite group subgroup $L$.
The symplectic form is replaced by a quadratic\footnote{There is a discrepancy between the
two sides of the analogy: The symplectic structure is replaced by an orthogonal one. This
is a version of the well-known analogy between between the Weil representation and spinors
\cite{Del:spinors}.} form on $A$ given by evaluation.
The analogue of the geometric quantization itself is the Dijkgraaf-Witten theory for $L$,
written $\sigma_{BL}^3$.

In any dimension, several different avatars of a ``projective action'' on a TQFT
are shown to be equivalent in \cite[\S 3.3]{VD:3d}:
Projective representations of $\Bord^{BG}_d$ are equivalent to 
modules over the quiche given by $(d+1)$-dimensional pure topological gauge
theory twisted by a representative for the projectivity class.
This is the main upshot of \Cref{hyp:sigma}, as is spelled out here in
\Cref{prop:sigma_mod}.

Restricting to dimension three, it is shown in \cite[\S 5]{VD:3d} that
orthogonal automorphisms of an odd-order metric group $A$ induce canonical projective
automorphisms of the theory $\sigma_{BL}^3$ which cannot necessarily be linearized.
Equivalently, certain $3$-group extensions (defined in \cite[\S 4.5]{VD:3d}) of $\O\left(A
, \ev \right)$ act linearly on $\sigma_{BL}^3$.
In other words, \textbf{twice-categorified geometric quantization of finite abelian odd-order
metric groups is projectively functorial.}
This is analogous to the linear action of the extension $\Spin$ of $\SO$ on spinors, as is
explained in \cite[\S 4.7]{VD:3d}.

This work spells out this canonical projective action 
in terms of ``topological symmetries'' \cite{FMT}. 
We identify a semiclassical automorphism (i.e. a correspondence/span)
realizing the action of each group element. This is done using a 
canonically defined $2$-cocycle on $A$, which plays an analogous role to that played by the
BKS kernels. In \Cref{sec:orthogonal_full} we explain the connection with the
classification of bimodule categories over pointed fusion categories.
The main theorems concerning this projective action (\Cref{thm:ENO,thm:FT})
can be summarized as follows (\Cref{sec:intro_O} is dedicated to a more detailed
overview):
\begin{thm*}
The projective action of $\O\left(A = L\dsum \pdual{L}\right)$ on $\sigma_{BL}^3$ can be
described via explicit bimodule categories. Explicitly, 
any element of $\O\left(A\right)$ exchanging $L$ with $\pdual{L}$ acts 
on $\sigma_{BL}^3$ via a Fourier-type transform defined in \Cref{defn:FT}. 
\end{thm*}

In contrast to the full orthogonal group, 
we show that if we only ask for automorphisms of any fixed Lagrangian $L$ in $A$,
then this induces a fully coherent linear action on the theory $\sigma_{BL}^3$.
In \Cref{sec:base} we also extend this result to the case of possibly nonabelian $L$,
twisted by a $3$-cocycle. 
% Groups can act via such automorphisms in a projective way,
% however this projectivity must necessarily appears already semiclassically.

The quadratic form on $A$ defines both this $2$-cocycle on $A$ (analogue of BKS kernel, as
detailed in \Cref{sec:BKS}) and a class in $H^4\left(B^2 A , \right)$ \cite{EM:q}.
Choosing a $4$-cocycle representative of the latter, the quantization of $A$
twisted by this $4$-cocycle is the so-called Crane-Yetter theory sending the point to the
category of vector spaces graded by $A$, with braiding determined by the quadratic form. 
This famously encodes the framing anomaly of Reshetikhin-Turaev theory:
The regular bimodule over the braided category defines a relative/twisted/anomalous
$3$-dimensional TQFT \cite{RT1,RT2,Wit:RT,Wal:RT,FHLT,Ben:unit,Ben:WRT_CY}.

One can now ask if a symmetry group can be gauged. 
In this paper, and in \cite{VD:3d}, the symmetry in question is the action of the
orthogonal group $\O\left(A\right)$ on Reshetikhin-Turaev theory (defined by the MTC
$\Vect\left[A\right]$) acting via braided autoequivalences. 
As is argued in \cite{VD:3d}, 
\textbf{the anomaly theory encoding the projectivity (or 't Hooft anomaly) of this symmetry can be
obtained by coupling the Crane-Yetter theory itself to an $\O\left(A\right)$-background
field}. 

We are considering the particularly simple case where the group is polarized: $A = L\dsum
\pdual{L}$. In this case the Reshetikhin-Turaev theory in question is in fact a
Turaev-Viro theory sending the point to the fusion category $\Vect\left[L\right]$.

Said differently, the fact that $\O\left(A\right)$ acts on $A = L\dsum \pdual{L}$ can be
leveraged to describe the projectivity of its action on $\sigma_{BL}^3$. 
The same phenomenon is happening in our description of the integral transforms herein:
Even though generic orthogonal group elements don't preserve $L$ within $A$, we leverage
the fact that the group tautologically acts on $A$ itself to construct the correspondences
which quantize to give the projective action of $\O\left(A\right)$ on $\sigma_{BL}^3$.

\subsection{Relationship with other work}

The same Fourier-type transform was studied in \cite{FT:ising}, where they use it to encode
Kramers-Wannier duality, with a focus on the $2$-dimensional CFTs living on the boundary
of the $3$-dimensional theory in question. In this paper we focus primarily on the
$3$-dimensional theory as the thing being acted \emph{on}, rather than the thing acting on
its boundary theories.

One of the main mathematical roles played by TQFTs is providing a source of manifold
invariants. Just as (framed, oriented, etc.) $3$-dimensional TQFTs give rise to invariants
of closed (framed, oriented, etc.) $3$-manifolds, the data of the action of a group $G$ on
a $3$-dimensional TQFT upgrades the theory: It now provides an invariant of $3$-manifolds
equipped with a $G$-bundle (sometimes with a possibly flat connection, depending on the
nature of the action of the group). 
Viewed through this lens, the results of this paper show that there is a possible anomaly
when trying to couple a $3$-dimensional TQFT/invariant to a flat $G$-bundle when $G$ is
acting via the full orthogonal group $\O\left(L\dsum \pdual{L}\right)$. On the other hand
there is no obstruction when $G$ is just acting  via $\Aut\left(L\right)$. 

Just as the mathematical theory of geometric quantization is tightly tied to the theory of
symplectic geometry and topology, the quantization of metric groups, as studied in this
paper, is closely tied to the geometry of finite orthogonal groups.
In geometric quantization, the local model is the Hilbert space of $L^2$-functions on half
of the variables. To glue these local models coherently over some global symplectic
manifold, one has to trivialize a certain $H^2$-obstruction which can be accessed via the
universal obstruction class in the cohomology of the symplectic group. 
Similarly, in the context of quantizing metric groups, the ``local model'' is a 
$3$-dimensional TQFT $\Vect\left[L\right]$. 
If we are given some base topological space, say a manifold/parameter space, with a
principal $\O\left(L\dsum \pdual{L}\right)$-bundle, the anomaly studied in this paper
pulls back to this base to give the obstruction to gluing these local models coherently
over the base parameter space.

As is discussed in \cite{FT:ising}, this domain wall between Dijkgraaf-Witten for $L$ and
$\pdual{L}$ is a form of electromagnetic duality for finite $3$-dimensional gauge theories. 
This domain wall, along with the action of the rest of the group, is encoded by an action
of a $4$-dimensional theory (or really a ``quiche'' \cite{FMT}) given by pure topological
$4$-dimensional $G$ gauge theory. These bulk $4$-dimensional theories also sit in various
dualities. These theories are finite versions of the topological twists of SUSY Yang-Mills
theory studied in the geometric Langlands program. 
The coupling of the $3$-dimensional theory to a background $G$-bundle, or equivalently to 
the bulk $4$-dimensional $G$ gauge theory, is in turn analogous to the \emph{relative}
Langlands program, and the anomalies encountered herein are analogous to the degree $4$
obstructions appearing in the relative Langlands program \cite[\S
5.1]{BZSV}, \cite[\S B]{BDFRT}, \cite{T:chiral}.

\subsubsection{Three-functor formalisms}

The existence of the theories and morphisms between them studied in this paper follows
from the cobordism hypothesis and \Cref{hyp:sigma}. The latter assumes the existence of a
functor from a category of spans of $\pi$-finite groupoids to a prescribed symmetric
monoidal $\infty$-category $\cT$. 
We interpret this as a ``quantization'' of $\pi$-finite groupoids. The study of theories
arising in this way was introduced in \cite{K:finite} and studied further in
\cite{Q:finite,T:finite}. We will consider the fully local case introduced in
\cite{F:higher} and studied in \cite[\S 3,8]{FHLT} and \cite[\S A]{FMT}.

In a different context, three-functor formalisms are functors from a category of spans 
of objects of a fixed category 
(usually of algebra geometric spaces such as varieties or stacks)
to the category of $\infty$-categories \cite{Sch:6F}.
Usually the correspondences are allowed to be
decorated by morphisms to some coefficient category. 
In other words, \Cref{hyp:sigma} asserts the existence of a $3$-functor
formalism for $\pi$-finite groupoids satisfying certain conditions. 
In these terms, \textbf{the theorems of this paper describe various (possibly projective)
equivariance properties which automatically hold for any $3$-functor formalism.}

\subsection{Outline}

In the rest of the introduction we provide 
a more detailed summary of the main results of this paper.

\Cref{sec:top_symm} is an overview of topological symmetry, mostly following \cite{FHLT,FMT}. 
Preliminaries regarding the target category and boundary/relative theories are discussed
in \Cref{sec:relative_theories}.
In \Cref{sec:sigma} we discuss quantization of finite groupoids, and introduce
\Cref{hyp:sigma}.
In \Cref{sec:mod_str} we review the notion of a module structure on a QFT from \cite{FMT}.

\Cref{sec:semiclassical} consists of a discussion of semiclassical symmetries. 
Namely, we define linear and projective semiclassical actions in \Cref{sec:semi}, and
discuss the anomaly/obstruction to extending a semiclassical action on a $\pi$-finite
space $X$ to a semiclassical action on the decorated space $\left(X , \tau\right)$ for
some cocycle $\tau$.

In \Cref{sec:base}, we show that automorphisms of a finite group $L$ preserving a
$3$-cocycle $\tau$ act canonically on the theory $\sigma_{BL , \tau}^3$. 
In \Cref{sec:f} we define the action up to isomorphism $f$, in \Cref{sec:m} we define a system
of products $m$, and in \Cref{sec:a} we define an associator.
In \Cref{sec:base_O3} we discuss the obstruction $O_3\left(f\right)$, and in
\Cref{sec:base_O4} we discuss the obstruction $O_4\left(f , m\right)$. 
In \Cref{sec:coherent_base} we state and prove \Cref{thm:base}.
The connection with semiclassical actions is discussed in 
\Cref{sec:semi_3d}.

In \Cref{sec:orthogonal} we instead consider automorphisms of the full group $L\dsum
\pdual{L}$ which preserve the quadratic form $\ev$.
In \Cref{sec:BKS} we define an analogue of the BKS kernel.
The quantization of this, as a domain wall between TQFTs, is considered in
\Cref{sec:intertwining}.
In \Cref{sec:orthogonal_pt}, the local data defining the domain wall (a bimodule category)
is discussed. This is related to the constructions in \cite{ENO:homotopy} in
\Cref{sec:orthogonal_full}, to give a projective action of the full finite orthogonal
group in \Cref{sec:orthogonal_TFT}.
\Cref{sec:schur} is dedicated to a discussion of some technical points regarding the Schur
isomorphism, which are used in \Cref{sec:orthogonal}.

\subsection{Acknowledgements}
I warmly thank my advisor David Ben-Zvi for his constant
help and guidance, without which this project would not exist. 
I would also like to thank Dan Freed, David Jordan, 
Lukas M\"uller, German Stefanich, Rok Gregoric, and 
Will Stewart for helpful discussions. 

Part of this project was completed while the author was supported by 
the Simons Collaboration on Global Categorical Symmetries.

This research was also partially completed while visiting the Perimeter Institute for
Theoretical Physics.
Research at Perimeter is supported by the Government of Canada through the Department of
Innovation, Science, and Economic Development, and by the Province of Ontario through the
Ministry of Colleges and Universities.

\subsection{Orthogonal symmetries}
\label{sec:intro_O}

Symmetries of the phase space which do not preserve a chosen Lagrangian act on the 
associated geometric quantization in a subtle way. One way of accessing the action is via
the theory of integral transforms associated to BKS kernels. 
In \Cref{sec:orthogonal} we develop an analogue of
this theory in the following context:
The phase space is a polarized metric group $L\dsum \pdual{L}$ equipped with evaluation,
and the symmetries are the elements of $\O\left(L\dsum \pdual{L}\right)$. 

In particular, for any two Lagrangians $L$ and $K$ in a metric group $A$, we consider the
correspondence diagram:
\begin{equation*}
\begin{cd}
& A / \left(L\cap K\right) 
\ar{dr}{ / \left(L / L\cap K\right)} 
\ar{dl}{ / \left(K / L\cap K\right)} &\\
A / K && A / L
\end{cd}
\end{equation*}
We make use of this diagram when $K$ is the image of $L$ under some $g\in
\O\left(A\right)$, and $L$ and $gL$ are transverse. This then becomes the
correspondence:
\begin{equation*}
\begin{cd}
& A \ar{dl}\ar{dr} & \\
L && gL
\end{cd}
\end{equation*}
This diagram defines a $1$-morphism in the category of $\pi$-finite groupoids with 
$1$-morphisms given by correspondences of such, 
$2$-morphisms given by correspondences of correspondences, and so on. 
In \Cref{sec:sigma} we consider the particular flavor of the correspondence category where
the groupoids can be decorated by $\units{\bk}$-cocycles (where $\bk$ is a fixed
algebraically closed field of characteristic zero). 
Cohomologically distinguishable $2$-cocycles on $A$
define different $1$-morphisms in this category. 

\Cref{sec:sigma} discusses a very general construction of TQFTs from $\pi$-finite
groupoids depending on \Cref{hyp:sigma}, which assumes that there is an appropriate
``quantization'' functor. This functor sends $\pi$-finite groupoids to objects of the
chosen target category, and correspondences to morphisms.
For target category $\Fus$, the Morita $3$-category of $\bk$-linear fusion categories,
this functor sends the groupoid $BL$ to the fusion category $\Vect\left[L\right]$, and
correspondences get sent to bimodule categories. 
Precomposing this functor with the mapping space functor $\Map\left(- , BL\right)$ from
$\Bord_3$ to the correspondence category, we obtain the TQFT $\sigma_{BL}^3$, untwisted
Dijkgraaf-Witten theory for the group $L$.
Correspondingly, decorated correspondences define domain walls between the associated
TQFTs.

When $A\simeq L\dsum \pdual{L}$ there is a canonical class in 
$H^2\left(A ,\units{\bk}\right)$. Write $\kappa_{\ev}$ for 
any $2$-cocycle representative of this class. The $1$-morphism in the correspondence
category 
\begin{equation*}
\begin{cd}
& \left(A , \kappa_{\ev}\right) \ar{dl}\ar{dr} & \\
L && gL
\end{cd}
\end{equation*}
induces the integral transform from the $3$-dimensional TQFT associated to $L$,
$\sigma_L^3$, and the one associated to $gL$, $\sigma_{gL}^3$. I.e. $\kappa_{\ev}$ is the
higher kernel of the integral transform defining the domain wall between these theories.

If $g$ is an orthogonal homomorphism which sends $L$ to $\pdual{L}$, then the associated
domain wall can be thought of as a higher Fourier transform, as defined in \Cref{defn:FT}:
\begin{equation*}
\FT \colon \sigma_{BL}^3 \to \sigma_{B\pdual{L}}^3
\end{equation*}
The assignment to the point is studied in \Cref{sec:orthogonal_pt}.
As a $\Vect\left[L\right]$-$\Rep\left(L\right)$-bimodule category this 
is a Morita equivalence, as studied in \cite{EGNO:TC,FT:ising}, 
which only has a single simple object, i.e. is $\Vect$.
Note that this exchanges convolution on $\Vect\left[L\right]$ with the symmetric-monoidal
structure on $\Rep\left(L\right)$, as the usual Fourier transform does.

After making a noncanonical choice, this is written as a
$\Vect\left[L\right]$-$\Vect\left[L\right]$-bimodule in \Cref{prop:FT_VectVect}, which
again only has a single simple object and is of order $2$.

In \Cref{sec:orthogonal_full} we establish a projective action of the full orthogonal
group $\O\left(L\dsum \pdual{L}\right)$ on $\sigma_{BL}^3$ by explicitly defining a 
$1$-morphism in the correspondence category for every element $g\in \O\left(L\dsum
\pdual{L}\right)$. 
This uses the classification of bimodule categories over pointed fusion categories in
\cite{ENO:homotopy}. 
These $1$-morphisms induce domain walls between the
different models for the same $3$-dimensional TQFT.

There is a certain element of $\O\left(L\dsum \pdual{L}\right)$ of order $2$, which sends
$L$ to $\pdual{L}$. The construction of a bimodule category from \cite{ENO:homotopy} from
this element is shown to agree with the assignment to the point of the twice-categorified
Fourier transform (\Cref{defn:FT})
in \Cref{thm:ENO}.
This is then cast in field-theoretic terms in 
\Cref{thm:FT}:

\begin{thm*}
Assume \Cref{hyp:sigma} and that $L$ has odd order.
The canonical projective action of $T_\sigma\in \O\left(L\dsum \pdual{L}\right)$ 
on $\sigma_{BL}^3$ is via the domain wall $\FT$ (\Cref{defn:FT}).
\end{thm*}

\subsection{Automorphisms of the base}

Now we will consider an action of a finite group $G$ on $L$ preserving the class of
$\tau$, which will turn out to define a canonical action of $G$ on $\sigma_{BL,\tau}^d$.
But first we will make some general considerations regarding actions of groups on 
semiclassical data. 

\subsubsection{Semiclassical action}
\label{sec:intro_semi}

Following ideas in \cite{FMT}, we define a \emph{semiclassical action} 
(\Cref{defn:semi}) of a finite group
$G$ on semiclassical data $\left(X ,\tau\right)$ (where $\tau$ represents a class in
$H^d\left(X , \units{\bk}\right)$)
to be a correspondence (i.e. $1$-morphism)
between the trivial $\pi$-finite space to $BG$ such that when we pull back along $\pt \to
BG$ we obtain $\left(X , \tau\right)$:
\begin{equation*}
\begin{cd}
&&\left(X , \tau\right) \ar{dl}\ar{dr}&&\\
& * \ar{dr}\ar{dl} && 
\left(C_G , \tau_{G} \right) \ar{dr}\ar{dl}& \\
* && 
BG
&& *
\end{cd}
\end{equation*}
I.e. this consists of an $X$-bundle over $BG$ and an extension of $\tau$ to the total
space $C_G$. 

The semiclassical action is \emph{projective} if $BG$ is replaced by $\left(BG , c\right)$ for some
$\left(d+1\right)$-cocycle on $BG$, and a trivialization defines a correspondence
($1$-morphism) from $\left(BG , c\right)$ to $BG$ (\Cref{defn:proj_semi}).
Composing the projective action with this correspondence ``linearizes'' the projective
action to an ordinary one.

Assuming we have a semiclassical action of $G$ on $X$, 
we would like to identify the obstruction to extending this to a semiclassical action
of $G$ on $\left(X , \tau\right)$. 
I.e. we have an $X$-bundle over $BG$ with total space $C_G$, and we seek
the obstruction to extending $\tau$ from $X$ to $C_G$.

The Serre spectral sequence provides a general method for answering this question.
In \Cref{thm:ss}, under some assumptions, we show that the spectral sequence
obstruction encodes an anomaly/anomalous symmetry of the theory $\sigma_{X , \tau}^d$. 
\Cref{thm:ss} generalizes the 't Hooft anomalies studied in
\cite{Mul:thesis,MS:anomalies,KT:discrete_anomalies} to the fully-extended setting.

In \Cref{cor:ss}, we note that the anomaly/projectivity agrees with (a restriction of) the
top $k$-invariant of the higher automorphism groupoid. 
\Cref{cor:ss} generalizes part of the arguments made in \cite[\S A]{ENO:homotopy} to
arbitrary dimensions.

\subsubsection{Three dimensions}

Now we specialize to three dimensions and consider the particular space $X = BL$ for $L$
a finite group with $\tau$ a group cocycle representing a class in
$H^3\left(BL , \units{\bk}\right)\simeq H^3_{\text{Grp}}\left(L , \units{\bk}\right)$.
Let $G$ be a finite group, and consider a group homomorphism 
\begin{equation*}
\phi \colon G \to \Aut\left(L , \tau\right)
\end{equation*}
where $\Aut\left(L , \tau\right)$ denotes the group of group automorphisms of $L$ which
preserve the cohomology class of $\tau$.

The fully-extended TQFT
\begin{equation*}
\sigma_{L , \tau} \colon \Bord^\fra_3 \to \Fus 
\end{equation*}
(see \Cref{sec:top_symm}) 
sends the point to 
\begin{equation*}
\cC = \Vect\left[L\right]^\tau \ ,
\end{equation*}
the fusion category of vector spaces graded by $L$, with convolution product twisted by
$\tau$. 

Therefore, one avatar of a group acting on a category might be a map:
\begin{equation*}
\mathbf{f} \colon G \to \Aut_{\Fus}\left(\cC\right) \ .
\end{equation*}
In \Cref{sec:base}, we define such a map by explicitly defining the map on the underlying
ordinary groups, and trivializing the obstructions in \cite{ENO:homotopy}.
Any functor 
\begin{equation*}
BG \to B\Aut_\Fus\left( \sigma_{BL,\tau}^3\left(\pt\right) \right)
\end{equation*}
is equivalent to a $\left(\sigma_{BG}^{4} , \rho\right)$-module structure on  
$\sigma_{BL , \tau}^3$ by \Cref{hyp:sigma} and the cobordism hypothesis. 
(See \Cref{prop:sigma_mod}.)
This shows the following, which 
is stated as \Cref{thm:base} in the body of this paper.

\begin{thm*}
Assume \Cref{hyp:sigma}.
Let $L$ and $G$ be finite groups.
Let $\tau \in Z^3_{\text{Grp}}\left(L , \units{\bk}\right)$ 
be a $3$-cocycle in the group cohomology of $L$.
Then any group homomorphism 
\begin{equation*}
\phi \colon G\to \Aut\left(L , \tau\right)
\end{equation*}
canonically defines a
$\left(\sigma_{BG}^{4} , \rho\right)$-module structure on  $\sigma_{BL , \tau}^3$. 

This canonical module structure can be twisted by a triple
$\left(\gamma , \mu , \alpha\right)$ where 
\begin{align*}
\gamma \in C^1_{\text{Grp}} \left(G ,C^2_{\text{Grp}} \left(L, \units{\bk}\right) \right)
&&
\mu \in Z^2_{\text{Grp}}\left(G , C^1_{\text{Grp}} \left(L , \units{\bk} \right)\right)
&&
\alpha \in Z^3\left(G , \units{\bk}\right)
\ .
\end{align*}
satisfy:
\begin{align*}
d_L\left(\gamma\left(g\right)\right) = \phi\left(g\right)\left(\tau\right)  \tau^{-1}  &&
d_L \mu = d_G \gamma^{-1} 
\end{align*}
\end{thm*}

As is shown in \cite[\S 11]{ENO:homotopy}, the existence of such a functor 
\begin{equation*}
\left(f,m,a\right) \colon BG \to B\Aut_\Fus\left(\sigma_{BL , \tau}^3\left(\pt\right)\right)
\end{equation*}
implies the existence of an extension of $\tau$ to a group $3$-cocycle $\tau_G$ 
on $L\rtimes_\phi G$. 
The quantization of the correspondence
\begin{equation*}
\begin{cd}
& \left(C_G , \tau_G\right)\ar{dl}\ar{dr} & \\
BG && \pt
\end{cd}
\end{equation*}
defines a $\left(\sigma_{BG}^4 , \rho\right)$-module structure on $\sigma_{BL , \tau}^3$.
This module structure equivalently defines a functor
\begin{equation*}
BG \to B\Aut_{\Fus}\left(\Vect\left[L\right]^\tau\right) \ ,
\end{equation*}
which provides an alternative description of the map $\left(f,m,a\right)$.
This is discussed in more detail in \Cref{sec:semi_3d}.

% Fakesection BODY
\section{Topological field theory}
\label{sec:top_symm}

Let $d \in \ZZ^{\geq 0}$.
Given a symmetric-monoidal $\left(\infty , d+1\right)$-category $\cT$, which will be the
target for our theories, write $\cT^\sim$ for the groupoid obtained by discarding all non-invertible
morphisms at all levels.
We will assume $\cT$ has duals.\footnote{Otherwise replace $\cT$ with the subcategory consisting of
the fully-dualizable objects of $\cT$.}
We will also assume that $\Om^{d+1} \cT = \bk$ for $\bk$ an algebraically closed field of
characteristic zero.
Throughout the paper, we will write $\Vect$ for the $\bk$-linear category of
finite-dimensional vector spaces over $\bk$.

\begin{rmk}
Besides having duals and satisfying $\Om^{d+1} \cT = \bk$, we will occasionally assume
that $\cT$ satisfies an additional hypothesis.
In particular, when we discuss module structures in the sense of \cite{FMT}, we will assume
that the target $\cT$ is sufficiently additive to support quantization of $\pi$-finite
spaces in the sense of \cite[\S 8]{FHLT} and \cite[\S A]{FMT} (this is \Cref{hyp:sigma}).
\end{rmk}

\subsection{Relative/boundary theories}
\label{sec:relative_theories}

A symmetric monoidal functor
\begin{equation*}
\al \colon \Bord_{d}^X \to \cT
\end{equation*}
is a \emph{once-categorified $d$-dimensional $X$-TQFT}, 
where $\cT$ is the fixed target from the beginning of \Cref{sec:top_symm}. Let 
\begin{equation*}
1_X \colon \Bord_{d}^X \to \cT
\end{equation*}
denote the trivial once-categorified $d$-dimensional $X$-TQFT.
Recall the notion of a relative theory \cite{FT:relative}.
These are also called \emph{twisted theories} \cite{ST:twisted,JFS}.

\begin{defn*}
A theory defined right-relative to $\al$ is a lax natural transformation (in the sense
of \cite{JFS})
\begin{equation*}
\al \to 1
\ .
\end{equation*}
A theory defined left-relative to $\al$ is a lax natural transformation
\begin{equation*}
1 \to \al \ .
\end{equation*}
\end{defn*}

\begin{rmk}
Recall the definition of lax (resp. oplax) natural transformations from \cite{JFS}. 
Consider the arrow categories $\cT^\down$ and $\cT^\to$, and the
source and target functors $s,t \colon \cT^* \to \cT$
for $* = \down , \to$.
Following \cite{JFS}, a lax (resp. oplax) natural transformation $\al \to 1$ is a functor 
\begin{align}
F_\al \colon \Bord^X_d \to \cT^{\down}
&&
\left(
\text{resp. }
F_\al \colon \Bord^X_d \to \cT^{\to}
\right)
\end{align}
such that $s\comp F_\al = \al$, and 
$t\comp F_\al = 1$.

Throughout, we will use the lax version, as written in the above definition of relative
theories. 
The reason we use the lax version, as noted in \cite[Example 7.3]{JFS}, that the
\emph{lax} natural transformations
from the trivial theory to itself consist of theories of dimension lower (\cite[Theorem
7.4]{JFS}) whereas the same is not true when lax is replaced with oplax.
We need the analogous result for $X$-theories in order to
establish a trivialized anomalous theory $1\lto{\sim}\al\to 1$ as an honest theory of
one dimension lower.

Also noted in \cite[Example 7.3]{JFS}, is the fact that oplax natural transformations are
``elements'' $F_{\sigma}\left(M\right) \colon 1 \to \al\left(M\right)$ for $M$ a
\emph{closed} bordism of any codimension, which is for example the point of view taken in
\cite{FT:relative}.
Oplax natural transformations are also used in \cite{FT:gapped}.
The difference does not appear in the present work: All results still hold upon replacing
all lax natural transformations with oplax ones.
\label{rmk:lax}
\end{rmk}

\begin{rmk}
Often a once-categorified $d$-dimensional theory $\al$
extends to a $\left(d+1\right)$-dimensional theory:
\begin{equation*}
\begin{tikzcd}
& \cT \\
\Bord_d^X \ar[hook]{r}\ar{ur}{\al}&
\Bord_{d+1}^X \ar[dashed]{u}
\end{tikzcd}
\end{equation*}
In this case, a relative theory $\al \to 1$ can be upgraded to what is called a
\emph{boundary theory}.
Boundary theories are defined as functors out of the extended bordism category
$\Bord^{X , \p}_{d+1}$, described in \cite[\S 4.3]{L:CH}.
The connection with the notion defined here is made in \cite[Theorem 7.15]{JFS}.
\label{rmk:boundary}
\end{rmk}

\subsection{Projective/anomalous theories}
\label{sec:proj}

Recall the projectivization $\PP \Om \cT$ of the target category as defined in
\cite{VD:3d}.

\begin{defn}
A $d$-dimensional \emph{projective $\left(X ,
\z\right)$-TQFT} is a non-zero symmetric-monoidal functor:
\begin{equation*}
\overline{F} \colon \Bord^{\left(X , \z\right)}_{d} \to \PP\Om \cT
\ .
\end{equation*}
Recall by construction of $\PP \Om \cT$ there is a canonical `target functor'
(forgetting the module) $t \colon \PP \Om \cT \to \units{\cT}$.
Given such a theory $\overline{F}$, composing with the functor $t$
results in an invertible theory:
\begin{equation*}
\al \colon \Bord^{\left(X , \z\right)}_{d} \lto{\overline{F}}
\PP\Om \cT \lto{s}
\units{\cT}
\ .
\end{equation*}
This theory is the \emph{projectivity} of the projective theory $\overline{F}$.
Sometimes to emphasize that a theory is \emph{not} projective, we will call it
\emph{linear}.
% \label{projective_TQFT}
\end{defn}

Passing to the assignment to the point, we obtain the space of projective families of
objects of $\Om \cT$ over $X$, $\Hom\left(X , \PP\Om \cT\right)$. The target functor 
$t \colon \PP \Om \cT$ induces a map to the space of families of invertible objects of
$\cT$ over $X$:
\begin{equation*}
\Hom\left(X , \PP \Om \cT\right) \to \Hom\left(X , \units{\cT}\right)
\end{equation*}
The fiber over any particular projectivity $\tau \colon X\to \units{\cT}$, which we will
write as $\Hom\left(X , \PP\Om \cT\right)_{\tau}$, consists of projective families of
objects of $\Om \cT$ with projectivity $\tau$.

\subsection{TQFTs associated to \texorpdfstring{$\pi$}{pi}-finite spaces}
\label{sec:sigma}

Let $X$ be a space (i.e. higher groupoid) which is (connected, pointed, and)
$\pi$-finite\footnote{This means $X$ has finitely many homotopy groups, each of which is
finite.}.
There is a recipe for constructing a TQFT using $X$, which was
introduced in \cite{K:finite} and studied further in \cite{Q:finite,T:finite}.
We will consider the fully local case introduced in \cite{F:higher} and studied in
\cite[\S 3,8]{FHLT} and \cite[\S A]{FMT}.
A formal treatment of these theories is the subject of an 
upcoming work of Claudia Scheimbauer and Tashi Walde.

\subsubsection{The quantization map}

We will proceed heuristically, following \cite[\S 3]{FHLT} and \cite[\S A.2]{FMT}, to fix
notation and describe expectations which will eventually be stated and assumed in
\Cref{hyp:sigma}.

Let $\Fam_{d+1}$ denote the category with objects finite $\left(d+1\right)$-groupoids, 
$1$-morphisms given by correspondences of $\pi$-finite spaces, 
$2$-morphisms given by correspondences of correspondences, and so on until level
$\left(d+1\right)$. 
(Two $\left(d+1\right)$-morphisms are regarded as identical if they are equivalent.)

Let $\cT$ be the arbitrary symmetric monoidal target with duals, fixed at the beginning of
\Cref{sec:top_symm}.
Let $Y$ be an object of $\Fam_{d+1}$.
A local system on $Y$ valued in $\cT$ is a functor $Y \to \cT$. 
Write $\Fam_{d+1}\left(\cT\right)$ for the category of $\pi$-finite spaces equipped with a
local system valued in $\cT$.

For example, $\Fam_{d+1}\left(B^{d+1} \units{\bk}\right)$ has objects given by pairs
$\left(Y , \tau\right)$, where $\tau$ is a cocycle 
\begin{equation*}
\tau \colon Y \to B^{d+1} \units{\bk}
\end{equation*}
representing a class in $H^{d+1}\left(Y , \units{\bk}\right)$.
A morphism is a correspondence:
\begin{equation}
\begin{cd}
& \left(E , \mu\right) \ar{dr}{p_2}\ar{dl}{p_1} & \\
\left(Y_1 , \tau_1\right)  && \left(Y_2 , \tau_2\right)
\end{cd}
\label{eqn:corr_mor}
\end{equation}
where $\mu \colon E \to B^d \units{\bk}$ satisfies
\begin{equation}
d \mu = \left( p_1^* \tau_1 \right)^{-1}  \cdot 
\left( p_2^* \tau_2 \right)
\ .
\label{eqn:triv}
\end{equation}
$2$-morphisms are correspondences between correspondences with a similar condition on the
cocycles, and so on to define morphisms up to level $\left(d+1\right)$.

Recall we assumed $\Om^{d+1}\cT = \bk$. Therefore there is a natural functor:
$B^{d+1} \units{\bk} \to \cT$ inducing a functor:
\begin{equation}
\Fam_{d+1}\left(B^{d+1} \units{\bk}\right) \to \Fam_{d+1}\left(\cT\right) \ .
\label{eqn:twist_loc_sys}
\end{equation}

We will assume that the following holds.

\begin{custom}{Hypothesis Q}
For $\cT$ appropriately additive\footnote{As is explained in \cite[\S 3]{FHLT}, for $X$ an
ordinary groupoid, then $\cT$ must be additive in the sense that the colimit $\dirlim_{x\in
X}\tau\left(x\right)$ in $\cT$ exists, and agrees with the limit $\invlim_{x\in X}
\tau\left(x\right)$, where we are regarding $\tau$ as defining a $\cT$-valued local system
on $X$ by \eqref{eqn:twist_loc_sys}.},
there is a quantization\footnote{See \cite[Remark A.7.1]{FMT} where it is explained how
$\Sum_{d+1}\left(X\right)$ (and therefore $\sigma_X^{d+1}$) can be obtained by 
integrating over fluctuating fields.}
functor
\begin{equation}
\Sum_{d+1} \colon \Fam_{d+1}\left(B^{d+1} \units{\bk}\right) \to \cT
\label{eqn:sum}
\end{equation}
such that there is an invertible natural transformation between 
\begin{equation*}
\left(X , \tau\right) \mapsto \Hom_\cT \left(1 , \Sum_{d+1}\left(X , \tau\right)\right)
\end{equation*}
and 
\begin{equation*}
\left(X , \tau\right) \mapsto 
\Hom\left(X , \PP\Om \cT\right)_{\tau}
\end{equation*}
viewed as functors out of $\Fam_{d+1}\left(B^{d+1}\units{\bk}\right)$, where 
$\Hom\left(X , \PP\Om \cT\right)_{\tau}$ is defined in \Cref{sec:proj}.
\label{hyp:sigma}
\end{custom}

\begin{rmk}
Unpacking the existence of the natural transformation in \Cref{hyp:sigma}, we see that it
ensures that we have equivalences 
\begin{equation}
\Hom_{\cT}\left(1 , \Sum_{d+1}\left(X\right)\right) \simeq
\Hom\left(\hofib\left(\tau\right) , \Om \cT\right)
\label{eqn:hyp:sigma}
\end{equation}
for all objects $\left(X , \tau\right)$ of $\Fam_{d+1}\left(B^{d+1}\units{\bk}\right)$,
and given a morphism from $\left(X_1 , \tau_1\right)$ to $\left(X_2 , \tau_2\right)$ in
$\Fam_{d+1}\left(B^{d+1} \units{\bk}\right)$, we have a commuting diagram:
\begin{equation*}
\begin{tikzcd}
\Hom_\cT\left(1 , \Sum_{d+1}\left(X_1\right)\right)
\ar[phantom]{r}{\simeq}\ar{d}&
\Hom\left(\hofib\left(\tau_1\right) , \Om \cT\right) \ar{d} \\
\Hom_\cT\left(1 , \Sum_{d+1}\left(X_2\right)\right)
\ar[phantom]{r}{\simeq} & 
\Hom\left(\hofib\left(\tau_2\right) , \Om \cT\right)
\end{tikzcd}
% \label{eqn:commuting_square}
\end{equation*}
I.e. higher representations of $\Om X$ are equivalent to morphisms from $1$ to
$\Sum_{d+1}\left(X\right)$ in $\cT$.
This generalizes the classical fact about modules over the group algebra and
$G$-representation, as in \Cref{exm:sigma_BG} \ref{sigma2_BG}.
\label{rmk:hyp:sigma}
\end{rmk}

\begin{rmk}
In \cite[\S 8.2]{FHLT}, the map $\Sum_n$ is constructed at the level of
objects and shown to be a functor up to $2$-morphisms.
In particular, the cases in \Cref{exm:sigma_BG}
are worked out in \cite[\S 8.1,8.3]{FHLT}.
\end{rmk}

\begin{exm}
\begin{enumerate}[label = (\roman*)]
\item Let $\cT = \Vect$. 
Then $\Sum_1\left(X\right) = \bk\left(\pi_0 X\right)$ is the vector space of $\bk$-valued
functions on $\pi_0 X$. \Cref{hyp:sigma} is satisfied, since 
the natural transformation \eqref{eqn:hyp:sigma} required in \Cref{hyp:sigma} is:
\begin{equation*}
\Hom_{\Vect}\left(1 , \Sum_1\left(X\right)\right) \simeq \Hom\left(X , \bk\right) \simeq 
\Hom\left(\pi_0 X , \bk\right) \ .
\end{equation*}

\item\label{sigma2_BG} Set $X = BG$ for a finite group $G$, and let $\cT = \Alg$ be the Morita $2$-category
of algebras.
Let $\Sum_2\left(BG\right) = \bk\left[G\right]$ be the group algebra.
\Cref{hyp:sigma} is satisfied, since 
the natural transformation \eqref{eqn:hyp:sigma} required in \Cref{hyp:sigma} is:
\begin{equation*}
\Hom_\Alg\left(1 , \Sum_2\left(BG\right)\right) = \lMod{\bk\left[G\right]} \simeq
\Rep\left(G\right) \simeq \Hom\left(BG , \Vect\right) \ .
\end{equation*}

We can equip $BG$ with a $2$-cocycle $\mu$ (i.e. a group $2$-cocycle)
which twists the convolution structure on the group algebra, resulting in 
$\Sum_2\left(BG , \tau\right)$.

\item \label{fus} Set $X = BG$.
Let $d = 2$, and take $\cT$ to be the Morita $3$-category of monoidal categories.
Then $\Sum_3\left(BG\right)$ is $\Vect\left[G\right]$, the fusion category of
vector bundles on $G$ with convolution. 
We can equip $BG$ with a $3$-cocycle $\al$ which twists the fusion structure on
$\Vect\left[G\right]$, yielding $\sigma_{BG,\al}^3\left(\pt\right) =
\Vect\left[G\right]^\al$.
This is \cite[Example A.65]{FMT}.

\item \label{bimodule_cats} Continuing the previous example, $\Sum_3$ can be extended to certain $1$-morphisms
in $\Fam_3\left(B^3 \units{\bk}\right)$ as follows.
Consider a $1$-morphism in $\Sum_3$ of the form
\begin{equation*}
\begin{tikzcd}[sep=tiny]
& \left(BC , \kappa\right) \ar{dl}\ar{dr}& \\
BG && BH
\end{tikzcd}
\end{equation*}
where $C\subset G\dsum H$ is a subgroup, and 
$\kappa$ is a $2$-cocycle on $C$. 
The alternating bicharacter $\alt \kappa$ on $C$ 
(see \Cref{sec:schur}) classifies a $\Vect\left[G\right]$-$\Vect\left[H\right]$-bimodule
category, as explained in \cite[\S 2.7]{ENO:homotopy}, and $\Sum_3$ sends this
$1$-morphism $\left(BC , \kappa\right)$ to this bimodule category. 

\begin{itemize}
\item E.g. if $C = G\times H$ is the whole group then, for any bicharacter, the
corresponding bimodule has a single simple object.
\item E.g. if $G = H$ and $C$ is the graph of a group automorphism of $G$, the bimodule
category is the identity bimodule $\Vect\left[G\right]$ with action twisted by the group
automorphism. 
\end{itemize}

\item\label{sigma_B2G} Set $X = B^2 G$ (where now $G$ is necessarily abelian).
Let $d = 3$ and take $\cT$ to be the Morita $4$-category of braided monoidal
categories, $\BrFus$ (see \cite[\S 3]{BJS:dualizable}).
Consider a cocycle $\tau \colon B^2 G \to B^4\units{\bk}$.
It is a classical theorem of Eilenberg-Maclane \cite{EM:q}
that cohomology classes in $H^4\left(B^2 G , \units{\bk}\right)$ correspond to quadratic
forms $G \to  \units{\bk}$.
Write $q_\tau$ for the form corresponding to $\left[\tau\right]$.
This defines a symmetric bicharacter on $G$:
\begin{equation*}
\lr{g , h}_\tau \ceqq \frac{q_\tau\left(g+h\right)}{q_\tau\left(g\right) q_\tau\left(h\right)}
\ .
\end{equation*}
Then $\Sum_4\left(B^2 G\right)$ is $\Vect\left[G\right]$ with convolution, and with
braiding specified on simples by:
\begin{equation*}
\beta_\tau \colon\bk_g * \bk_h = \bk_{gh} \lto{\lr{g,h} \id_{\bk_{gh}}} \bk_{gh} =
\bk_{hg} = \bk_h * \bk_g \ .
\end{equation*}

\item Let $X = BG$ and $d = 3$.
If $G$ is abelian, and we take $\cT = \BrFus$, then $\Sum_4\left(BG\right)$ is the
symmetric (so in particular braided) fusion category $\Vect\left[G\right]$. 

\item Let $X = BG$ and $d = 3$, only now
we would like to use the category of monoidal $2$-categories. Then 
$\Sum_4\left(BG\right)$ is the fusion $2$-category (in the sense of \cite{DR:2Fus}) of 
$G$-graded $2$-vector spaces \cite[Construction 2.1.13]{DR:2Fus}.
The dualizability of fusion $2$-categories is the subject of 
\cite[Theorem C]{Dec:dualizable_2Fus}.
\label{sigma4_BG}
\end{enumerate}
\label{exm:sigma_BG}
\end{exm}

\subsubsection{TQFTs from the quantization map}
\label{subsec:sigma}

The upshot of assuming \Cref{hyp:sigma} is that,
for a fixed object $\left(X , \tau\right)$ of $\Fam_{d+1}\left(B^{d+1}\units{\bk}\right)$, 
\eqref{eqn:sum} can be composed with the mapping space functor\footnote{Note that
\eqref{eqn:twist_loc_sys} allows us to construct a $\cT$-valued local system from the
cocycle $\tau$.} to obtain the theory 
\begin{equation*}
\sigma_{X , \tau}^{d+1} \colon 
\Bord^{\fra}_{d+1} \lto{\pi_{\leq d+1} \Map\left(- , X\right)}
\Fam_{d+1}\left(\cT\right)\lto{\Sum_{d+1}} \cT \ .
\end{equation*}

\begin{rmk}
As is remarked in \cite[\S A]{FMT} and \cite[\S 3,8]{FHLT},
$\sigma_{X , \tau}^{d+1}$ can be upgraded to an oriented theory, and if $\tau$ is trivial
then it can even be upgraded to an unoriented theory. 
We will work with the underlying framed theories in this paper.
\end{rmk}

The following results follow directly from \Cref{hyp:sigma} and the cobordism hypothesis.

\begin{prop}
Assume \Cref{hyp:sigma}. A  morphism from $\left(X_1 , \tau_1\right)$ to $\left(X_2 ,
\tau_2\right)$ in 
\begin{equation*}
\Fam_{d+1}\left(B^{d+1}\units{\bk}\right)
\end{equation*}
(i.e. correspondence as in
\eqref{eqn:corr_mor}) induces a bimodule (i.e. domain wall)
\begin{equation*}
\sigma^{d+1}_{X_1 , \tau_1} \to \sigma^{d+1}_{X_2 , \tau_2} \ .
\end{equation*}
\label{prop:corr_mor}
\end{prop}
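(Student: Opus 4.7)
The plan is to exploit Hypothesis Q directly: it promotes $\Sum_{d+1}$ to a genuine functor of $(d+1)$-categories, and the theories $\sigma_{X,\tau}^{d+1}$ are defined as the composition of the mapping space functor with $\Sum_{d+1}$. The content of the proposition is then almost tautological once one invokes the version of the Cobordism Hypothesis collected in \Cref{sec:cob_hyp,sec:relative_theories} to convert a $1$-morphism in $\cT$ between the values at the point into an honest domain wall between the two framed TQFTs.

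Concretely, the first step is to observe that a correspondence as in \eqref{eqn:corr_mor} is by definition a $1$-morphism in $\Fam_{d+1}\left(B^{d+1}\units{\bk}\right)$, so $\Sum_{d+1}$ sends it to a $1$-morphism $\Sum_{d+1}\left(X_1 , \tau_1\right) \to \Sum_{d+1}\left(X_2 , \tau_2\right)$ in $\cT$. Since $\Map\left(\pt , X\right)\simeq X$ naturally, this $1$-morphism is exactly a $1$-morphism $\sigma^{d+1}_{X_1,\tau_1}\left(\pt\right)\to \sigma^{d+1}_{X_2,\tau_2}\left(\pt\right)$ in $\cT$. The second step is to reinterpret this $1$-morphism as a domain wall: by the Cobordism Hypothesis for theories with defects recalled in \Cref{sec:relative_theories}, $1$-morphisms between the point values of two fully extended framed TQFTs are the same data as $1$-morphisms (i.e. bimodules, i.e. domain walls) between the theories themselves. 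One then packages this uniformly by precomposing the entire diagram with the mapping space functor $M\mapsto \pi_{\leq d+1}\Map\left(M , -\right)$, obtaining the desired bimodule between $\sigma^{d+1}_{X_1,\tau_1}$ and $\sigma^{d+1}_{X_2,\tau_2}$.

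The main obstacle is not the combinatorics of the correspondence but the coherence of the interaction between the mapping space functor and the twists $\tau_i$ and $\mu$: one needs the trivialization condition $d\mu = \left(p_1^*\tau_1\right)^{-1}\cdot p_2^*\tau_2$ from \eqref{eqn:triv} to pull back, compatibly at all categorical levels, along $M\mapsto \pi_{\leq d+1}\Map\left(M , -\right)$, so that the resulting family of correspondences is natural in the bordism $M$. This is precisely the sort of coherence which is built into the formal treatment underlying \Cref{hyp:sigma} (and the forthcoming Scheimbauer--Walde construction); granting it, the proposition follows immediately from the functoriality of $\Sum_{d+1}$ together with the Cobordism Hypothesis with defects.
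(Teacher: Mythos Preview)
Your proposal is correct and matches the paper's treatment, which simply states that the proposition ``follows directly from \Cref{hyp:sigma}'' without further elaboration. The one point worth noting is that the paper's subsequent remark emphasizes that the Cobordism Hypothesis is not actually needed: since a correspondence $X_1 \leftarrow E \to X_2$ induces, for every bordism $M$, a correspondence $\Map(M,X_1) \leftarrow \Map(M,E) \to \Map(M,X_2)$ naturally in $M$, applying $\Sum_{d+1}$ directly yields the domain wall. You do mention this mapping-space route at the end, but you lead with the Cobordism Hypothesis step, which the paper singles out as avoidable; otherwise the arguments coincide.
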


\begin{prop}
Assume \Cref{hyp:sigma}. Every boundary theory for $\sigma_{X , \tau}^{d+1}$ is
classified by a symmetric-monoidal functor:
\begin{equation*}
\Bord_{d}^{X} \to \PP\Om \cT
\ .
\end{equation*}
If $\tau$ is trivial, then the boundary theories are classified by symmetric-monoidal
functors:
\begin{equation*}
\Bord_d^{X} \to \Om \cT \ .
\end{equation*}
\label{prop:sigma_mod}
\end{prop}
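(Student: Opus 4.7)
The plan is to combine three ingredients: the general classification of boundary theories by the Cobordism Hypothesis (as recalled in \Cref{sec:relative_theories}), the natural transformation supplied by \Cref{hyp:sigma}, and the Cobordism Hypothesis with tangential structure in dimension $d$.

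First, recall that a boundary theory for a fully-extended $(d+1)$-dimensional TQFT $\sigma\colon \Bord_{d+1}^\fra \to \cT$ is classified, via the Cobordism Hypothesis, by a sufficiently dualizable morphism $1 \to \sigma\left(\pt\right)$ in $\cT$. Applied to $\sigma = \sigma_{X , \tau}^{d+1}$, this reduces the problem to describing $\Hom_\cT\left(1 , \Sum_{d+1}\left(X , \tau\right)\right)$ together with its fully-dualizable objects.

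Second, invoke \Cref{hyp:sigma} in the form of \Cref{rmk:hyp:sigma}: we have an equivalence
\begin{equation*}
\Hom_\cT\left(1 , \Sum_{d+1}\left(X , \tau\right)\right) \simeq \Hom\left(\hofib\left(\tau\right) , \Om \cT\right) \ ,
\end{equation*}
natural in $\left(X , \tau\right)$ by \eqref{eqn:commuting_square}. Third, apply the Cobordism Hypothesis with tangential structure at level $d$: symmetric-monoidal functors $\Bord_d^Y \to \Om \cT$ correspond to maps from the space $Y$ into the maximal sub-$\infty$-groupoid of fully-dualizable objects of $\Om \cT$. Taking $Y = \hofib\left(\tau\right)$ converts the right-hand side above into the desired moduli of $d$-dimensional $\hofib\left(\tau\right)$-structured TQFTs valued in $\Om \cT$. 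For the second claim, observe that when $\tau$ is trivial the homotopy fiber satisfies $\hofib\left(\tau\right) \simeq X$, so the classification specializes to $\Bord_d^X \to \Om \cT$.

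The main obstacle is matching dualizability conditions across the equivalence of \Cref{hyp:sigma}: one must verify that the natural transformation restricts to an equivalence on the sub-$\infty$-groupoids of fully-dualizable objects, so that the Cobordism Hypothesis may be applied on both ends to produce a bijection of \emph{theories} (not merely of morphisms in $\cT$). This compatibility is implicit in the construction of $\Sum_{d+1}$ via fiber integration over $\pi$-finite spaces, as in \cite[\S A]{FMT} and \cite[Proposition B.14]{VD:3d}, but is the step that requires care. With this in place, the statement is the composite of the three equivalences assembled above.
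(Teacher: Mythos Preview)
The paper does not give an in-text proof of this proposition: it simply records it as a reformulation of \Cref{hyp:sigma} and cites \cite[Proposition B.14]{VD:3d} for the argument. Your proposal supplies exactly the expected proof sketch --- classify boundary theories by morphisms $1 \to \sigma(\pt)$ via the Cobordism Hypothesis for the arrow category, invoke the equivalence \eqref{eqn:hyp:sigma} from \Cref{hyp:sigma}, and then apply \eqref{eqn:CH_X} in dimension $d$ with $Y = \hofib(\tau)$ --- and you correctly flag the one genuine subtlety (that the equivalence of \Cref{hyp:sigma} must match fully-dualizable objects on both sides so that one is really comparing theories and not just hom-spaces). This is the natural argument and is consistent with what the cited reference does; there is nothing to correct.
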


\subsection{Module structures}
\label{sec:mod_str}

We summarize the material used in our construction from \cite{FMT}.
See \cite[\S 3]{FMT} for a more detailed discussion of these definitions.

Let
\begin{equation*}
\sigma \colon \Bord_{d+1}^\fra \to \cT
\end{equation*}
be a $\left(d+1\right)$-dimensional TQFT valued in the fixed target $\cT$ from the
beginning of \Cref{sec:top_symm}.
Recall the following definition in \cite{FMT}.

\begin{defn*}
A \emph{$d$-dimensional quiche} is a pair $\left(\sigma , \rho\right)$ in which 
$\rho$ is a right topological boundary theory (or \emph{right $\sigma$-module}), which we will write as 
$\rho\colon \sigma \to 1$.
\end{defn*}

\begin{rmk}
All of the quiches considered in this paper will be of the form
$\left(\sigma_{X ,\tau} , \rho_{X , \tau}\right)$, where $\sigma_{X,\tau}$ is 
the theory associated to $\left(X , \tau\right)$ as in \Cref{sec:sigma}, and 
the (right) boundary theory $\rho_{X,\tau}$ is the natural transformation induced by the
correspondence diagram:
\begin{equation*}
\begin{cd}
& \pt \ar{dr}\ar{dl} & \\
\left(X , \tau\right) && \pt 
\end{cd}
\end{equation*}
as in \Cref{prop:corr_mor}.
Given a pointed space $X$, we will always write $\rho_{X , \tau}$ for this boundary theory. 
\label{rmk:rho_X}
\end{rmk}

A quiche is an abstract symmetry datum, in the same sense as an algebra.
The following definition is the analogue of a module, i.e. a realization of the 
quiche as symmetries of a given theory. 

\begin{defn*}
Let $\left(\sigma , \rho\right)$ be a $d$-dimensional quiche, and let $F$ be a
$d$-dimensional TQFT.
A $\left(\sigma , \rho\right)$-module structure on $F$ is the pair $\left(F_\sigma ,
\theta\right)$ where $F_{\sigma}$ is a (left) boundary theory
$F_\sigma \colon 1 \to \sigma$ which is equipped with an isomorphism of
$d$-dimensional theories:
\begin{equation*}
\theta \colon \rho\tp_\sigma F_\sigma \lto{\sim} F \ .
\end{equation*}
\end{defn*}

\section{Semiclassical symmetry}
\label{sec:semiclassical}

\subsection{Semiclassical group actions}
\label{sec:semi}

Fix $d \in \ZZ^{\geq 0}$, and let $X$ be a $\pi$-finite space with cocycle $\tau \colon X
\to B^d \units{\bk}$.

\begin{defn}
A semiclassical action of a group $G$ on $\left(X , \tau\right)$ is 
a decorated correspondence of spaces
($1$-morphism in $\Fam_d\left(B^d \units{\bk}\right)$)
\begin{equation}
\begin{cd}
& \left(C_G , \tau_{G} \right) \ar{dr}\ar{dl}& \\
BG
&& * 
\end{cd}
\label{eqn:defn:CG}
\end{equation}
such that we have a pullback:
\begin{equation*}
\begin{cd}
&&\left(X , \tau\right) \ar{dl}\ar{dr}&&\\
& * \ar{dr}\ar{dl} && 
\left(C_G , \tau_{G} \right) \ar{dr}\ar{dl}& \\
* && 
BG
&& *
\end{cd}
\end{equation*}
\label{defn:semi}
\end{defn}

I.e. a semiclassical action of $G$ on $\left(X , \tau\right)$ is just an $X$-bundle over
$BG$, and an extension of $\tau$ to the total-space of this bundle.

\begin{exm}
Let $X = BL$ for a finite group $L$.
For trivial $\tau$, there is always a canonical semiclassical action of
$\Aut_\Grp\left(L\right)$ given by
\begin{equation*}
C_{\Aut} = B\left(L \rtimes \Aut_{\Grp}\left(L\right)\right)
\end{equation*}
since the following is a pullback diagram:
\begin{equation*}
\begin{cd}
&& BL\ar{dr}\ar{dl} &&  \\
& * \ar{dr}\ar{dl} && 
B\left(L\rtimes \Aut_\Grp\left(L\right)\right)
\ar{dr}\ar{dl}& \\
* && B\Aut_{\Grp}\left(L\right) && * 
\end{cd}
\end{equation*}
The same holds for any $\phi \colon G \to \Aut_\Grp\left(L\right)$.
Namely, 
\begin{equation*}
C_G = B\left(L\rtimes_\phi G\right) 
\end{equation*}
is the pullback
\begin{equation*}
\begin{cd}
&&
C_G
\ar{dr}\ar{dl}
&&
\\
&
BG 
\ar{dl}\ar{dr}{B\phi}
&&
C_{\Aut}
\ar{dl}\ar{dr}
&
\\
BG
&&
B\Aut_{\Grp}\left(L\right)
&& *
\end{cd}
\end{equation*}
and this defines a semiclassical action because we also have a pullback
\begin{equation*}
\begin{cd}
&& X \ar{dl}\ar{dr}&&\\
& * \ar{dr}\ar{dl} && 
C_G  \ar{dr}\ar{dl}& \\
* && 
BG
&& *
\end{cd}
\end{equation*}
\label{exm:semi}
\end{exm}

\begin{defn}
A \emph{projective} semiclassical action of a group $G$ on $\left(X , \tau\right)$ with
projectivity $c \colon BG \to B^{d+1} \units{\bk}$ is a correspondence of spaces
($1$-morphism in $\Fam_d\left(B^d \units{\bk}\right)$)
\begin{equation*}
\begin{cd}
& \left(C_G , \tau_{G} \right) \ar{dr}\ar{dl}& \\
\left(BG , c\right)
&& * 
\end{cd}
\end{equation*}
such that we have a pullback:
\begin{equation*}
\begin{cd}
&&\left(X , \tau\right) \ar{dl}\ar{dr}&&\\
& * \ar{dr}\ar{dl} && 
\left(C_G , \tau_{G} \right) \ar{dr}\ar{dl}& \\
* && 
\left(BG , c\right)
&& *
\end{cd}
\end{equation*}
\label{defn:proj_semi}
\end{defn}

\subsection{Spectral sequence obstruction}
\label{sec:ss}

Consider a semiclassical action $C_G$ of $G$ on $X$ as in \Cref{defn:semi} (i.e. the twist is
trivial).
Now, given some nontrivial $\tau$, we can pose the question of whether there is a semiclassical
action $\left(C_G , \tau_G\right)$ of $G$ on $\left(X , \tau\right)$. 
I.e. we are asking if 
the cocycle $\tau$ extends to a cocycle $\tau_G$ on $C_G$:
\begin{equation*}
\begin{tikzcd}
& B^d \units{\bk} & \\
X \ar{rr}\ar{d} \ar{ur}{\tau} &&
C_G \ar{d} \ar[dashed]{ul}{\tau_G} \\
* \ar{rr}&&
BG
\end{tikzcd}
\end{equation*}
The Serre spectral sequence provides a general method for answering this question.
The first quadrant Serre spectral sequence corresponding to the fibration
\begin{equation*}
\begin{cd}
X \ar{r}{\io}&
C_G \ar{d} \\
& BG 
\end{cd}
\end{equation*}
is 
\begin{equation*}
E^{p,q}_2 = H^p\left(BG , H^q\left(X , \units{\bk}\right)\right) \Rightarrow
H^{p+q}\left(C_G , \units{\bk}\right) \ .
\end{equation*}
Recall the differentials:
\begin{equation*}
d_r^{p,q} \colon E_r^{p,q} \to E_r^{p+r , q-r+1} \ .
\end{equation*}

The group $G$ acts on $H^d\left(X , \units{\bk}\right)$
(induced by conjugation in $\Om C_G$)
and every cocycle on $C_G$ is invariant under this action, so we insist that 
\begin{equation*}
\tau \in H^d\left(X , \units{\bk}\right)^G \ .
\end{equation*}
Then we have that 
\begin{equation*}
\tau \in \im\left(\io^*\right) = E^{0,d}_{d+2} \inj H^d\left(X , \units{\bk}\right)^G
\end{equation*}
if and only if
\begin{equation}
d_k^{0,d} \tau = d_k \tau = 0 \in E_k^{k , d+1-k}
\ .
\label{eqn:dk}
\end{equation}

\begin{theorem}
Assume \Cref{hyp:sigma}.
If the first $\left(d-1\right)$ obstructions \eqref{eqn:dk} vanish, i.e. $d_k \tau$ for
$2\leq k\leq d$, then there is a projective semiclassical action 
of $G$ on $\left(X , \tau\right)$, and this
quantizes to an anomalous action of $G$ on the theory $\sigma_{X,\tau}^d$ with anomaly
classified by $d_{d+1} \tau$.
\label{thm:ss}
\end{theorem}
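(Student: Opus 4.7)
The plan is to construct a projective semiclassical action on $\left(X , \tau\right)$ whose projectivity is precisely the class $d_{d+1}\tau \in H^{d+1}\left(BG , \units{\bk}\right)$, and then quantize it via \Cref{prop:corr_mor}.

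First I would extract the cochain-level content of the vanishing hypothesis. Working in a filtered cochain model of the Serre spectral sequence for the fibration $X \to C_G \to BG$, the differentials $d_k$ out of $E_k^{0,d}$ measure successive obstructions to extending a fiber cocycle through the skeletal filtration of $BG$. Under the hypothesis that $d_k \tau = 0$ for $2 \leq k \leq d$, one can then produce a $d$-cochain $\tau_G$ on $C_G$ with $\io^* \tau_G = \tau$ whose coboundary is concentrated in the top filtration degree, i.e.\ $d \tau_G = \left(\pi^* c\right)^{-1}$ for some $\left(d+1\right)$-cocycle $c$ on $BG$ (where $\pi \colon C_G \to BG$ is the projection), and the class $\left[c\right] \in H^{d+1}\left(BG , \units{\bk}\right)$ is by construction the final surviving differential $d_{d+1} \tau$.

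Second, this data assembles directly into a projective semiclassical action in the sense of \Cref{defn:proj_semi}: the correspondence $\left(BG , c\right) \leftarrow \left(C_G , \tau_G\right) \to \pt$ satisfies the twisted cocycle relation \eqref{eqn:triv} by the construction of $\tau_G$, and the pullback along $\pt \to BG$ recovers $\left(X , \tau\right)$ by $\io^* \tau_G = \tau$ together with the fact that the fiber of $C_G \to BG$ is $X$. Applying \Cref{prop:corr_mor} (which uses \Cref{hyp:sigma}) to this morphism in $\Fam_{d+1}\left(B^{d+1}\units{\bk}\right)$ produces a domain wall $\sigma_{BG , c}^{d+1} \to 1$ in $\cT$, which in the language of \Cref{sec:mod_str} is a right module structure on the trivial theory for the quiche associated with $\left(BG , c\right)$. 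Restricting along $\pt \to BG$ the fiber theory is $\sigma_{X , \tau}^d$, so this datum is precisely an action of $G$ on $\sigma_{X , \tau}^d$ anomalous with anomaly classified by $\left[c\right] = d_{d+1} \tau$.

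The main obstacle is the bookkeeping in the first step: one must produce an honest cochain $\tau_G$ whose coboundary is \emph{strictly} the pullback of a chosen cocycle representative of $d_{d+1}\tau$, with signs and normalizations compatible with \eqref{eqn:triv}. This is the standard identification of a surviving Serre differential with a secondary obstruction / transgression, but it needs to be carried out at the level of representatives in order to feed into the formalism of \Cref{sec:sigma}. Once this bookkeeping is settled, the quantization step is a clean application of \Cref{prop:corr_mor}, and the anomalous $G$-action on $\sigma_{X , \tau}^d$ follows from \Cref{sec:mod_str}.
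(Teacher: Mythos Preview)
Your proposal is correct and follows essentially the same route as the paper: extract from the vanishing of $d_2\tau,\dots,d_d\tau$ a cochain $\tau_G$ on $C_G$ restricting to $\tau$ with $d\tau_G$ pulled back from $BG$ representing $d_{d+1}\tau$, read this as a projective semiclassical action in the sense of \Cref{defn:proj_semi}, and then quantize via \Cref{prop:corr_mor}. The only difference is in the final identification: the paper packages the quantized correspondence as a boundary theory $1 \to \sigma_{BG,\,d_{d+1}\tau}^{d+1}$ and invokes \cite[Theorem~C.16]{VD:3d} to translate this into an anomalous $G$-action on $\sigma_{X,\tau}^d$, whereas you appeal directly to the module language of \Cref{sec:mod_str}; these are the same statement. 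Your care about producing $\tau_G$ at the level of cocycle representatives (so that the relation \eqref{eqn:triv} holds on the nose) is a point the paper's proof leaves implicit.
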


\begin{proof}
Assuming the obstructions $d_k \tau$ vanish for $2\leq k \leq d$, 
there exists some class
\begin{equation*}
\tau_d \colon C_G \to B^{d} \units{\bk}
\end{equation*}
satisfying 
\begin{equation*}
d \tau_d = d_{d+1} \tau_d
\end{equation*}
so that we have a projective semiclassical action (\Cref{defn:proj_semi}):
\begin{equation*}
\begin{cd}
& \left(C_G , \tau_{G} \right) \ar{dr}\ar{dl}& \\
\left(BG , d_{d+1} \tau \right)
&& * 
\end{cd}
\end{equation*}
The quantization of this correspondence is a boundary theory:
\begin{equation*}
1 \to \sigma_{BG , d_{d+1} \tau}^{d+1}
\end{equation*}
such that pairing with the regular boundary condition gives a theory canonically
equivalent to $\sigma_{X,\tau}^d$.

By \Cref{prop:sigma_mod},
this is equivalent to an anomalous theory defined on $\Bord^{BG}$, with anomaly theory
classified by $d_{d+1} \tau \colon BG \to B^{d+1} \units{\bk}$.
\end{proof}

Recall our $d$-dimensional theories $\sigma_{X , \tau}^d$ are valued in $\Om \cT$ 
(where $\cT$ is fixed at the beginning of \Cref{sec:top_symm}).
Consider the $d$-type 
\begin{equation*}
B\Aut_\cT \left(\sigma_{X , \tau}^d \left(\pt\right)\right)
\end{equation*}
with top homotopy group $\pi_d$. 
Because we assumed $\Om^{d+1} \cT \simeq \units{\bk}$, we have a map:
\begin{equation}
\io \colon B^{d} \units{\bk} \to B^d \pi_d 
\ .
\label{eqn:io_scalars}
\end{equation}
Note that the target of $\io$ is the initial term of the Whitehead tower of
the $d$-type 
\begin{equation*}
B\Aut_\cT\left(\sigma^d_{X , \tau}\left(\pt\right)\right)
\ .
\end{equation*}
Write the $k$-invariant of 
$B\Aut_\cT\left(\sigma^d_{X , \tau}\left(\pt\right)\right)$
over the truncation $\pi_{\leq d-1}$ as
\begin{equation}
k \colon \pi_{\leq d-1} B\Aut_\cT\left(\sigma^d_{X , \tau}\left(\pt\right)\right)
\to B^{d+1} \pi_d \ .
\label{eqn:k_BAut}
\end{equation}
Unpacking \Cref{thm:ss} on the point gives the following. 

\begin{cor}
If the first $\left(d-1\right)$ obstructions \eqref{eqn:dk} vanish, then 
there is a map 
\begin{equation*}
\mathbf{f}\colon 
BG \to \pi_{\leq d-1} B\Aut_{\cT}\left(\sigma_{X , \tau} \left(\pt\right)\right)
\end{equation*}
and the projectivity of the projective $G$-action from \Cref{thm:ss} agrees with the
restriction of the $k$-invariant:
\begin{equation*}
\begin{tikzcd}
BG \ar{r}{\mathbf{f}}\ar{d}{d_{d+1}\tau} &
\pi_{\leq d-1} B\Aut_{\cT}\left(\sigma_{X , \tau} \left(\pt\right)\right)
\ar{d}{k} \\
B^{d+1} \units{\bk} \ar{r}{B\io}&
B \left(B\Om\right)^{d} B\Aut_{\cT} \left(\sigma_{X , \tau} \left(\pt\right)\right)
\end{tikzcd}
\end{equation*}
where $\io$ is from \eqref{eqn:io_scalars} and $k$ is from \eqref{eqn:k_BAut}.
\label{cor:ss}
\end{cor}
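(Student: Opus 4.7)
The plan is to combine \Cref{thm:ss} with a Cobordism-Hypothesis-style dictionary between anomalous $G$-actions on a theory $F$ and maps from $BG$ into the truncated automorphism space of $F(\pt)$. First, under the vanishing hypothesis on the lower differentials $d_k\tau$ for $2\leq k\leq d$, \Cref{thm:ss} directly produces a $(\sigma^{d+1}_{BG,\, d_{d+1}\tau},\rho)$-module structure on $\sigma^d_{X,\tau}$, and it is this module structure that we want to repackage as the asserted $\mathbf{f}$.

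Next I would unpack the module structure into automorphism data. By \Cref{prop:sigma_mod} combined with the Cobordism Hypothesis, giving a left boundary theory for $\sigma^{d+1}_{BG, c}$ is, up to the $B^{d+1}\units{\bk}$-twist $c$, the same as giving a map from $BG$ into the $d$-type $B\Aut_\cT(\sigma^d_{X,\tau}(\pt))$. In our case the twist $c = d_{d+1}\tau$ witnesses the failure of the lift to the full $d$-type, but the composite with the Postnikov truncation
\begin{equation*}
B\Aut_\cT\left(\sigma^d_{X,\tau}(\pt)\right) \to \pi_{\leq d-1} B\Aut_\cT\left(\sigma^d_{X,\tau}(\pt)\right)
\end{equation*}
always exists. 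This composite is the desired $\mathbf{f}$.

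Finally I would identify $\mathbf{f}^* k$ with $(B\io)\circ d_{d+1}\tau$ by appealing to the universal property of the $k$-invariant: it is, by construction, the obstruction to lifting a map into $\pi_{\leq d-1} B\Aut_\cT$ to a map into the full $d$-type. Since the same obstruction has already been computed on the semiclassical side as the Serre differential $d_{d+1}\tau$ (this is precisely the content of \Cref{thm:ss}), the two classes must agree in $H^{d+1}(BG,\units{\bk})$, which is the commuting square in the statement.

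The main obstacle is making this last identification rigorous: one must check that the ``anomalous action $\leftrightarrow$ map into truncated $B\Aut$ with prescribed pulled-back $k$-invariant'' correspondence is compatible with the Serre spectral sequence construction of the anomaly from the classifying cocycle $\tau_G$ on $C_G$. The cleanest route I see is to exhibit both classes as pullbacks, along the classifying map of $\sigma^d_{X,\tau}$ viewed as a family of theories over $BG$, of a single universal obstruction class on the moduli of $d$-dimensional theories in $\cT$; the Serre-theoretic and Postnikov-theoretic computations are then two different but necessarily equivalent ways of evaluating the same pullback.
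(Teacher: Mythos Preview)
Your proposal is correct and follows essentially the same route as the paper: start from the projective semiclassical action of \Cref{thm:ss}, quantize to a boundary theory for $\sigma_{BG,\,d_{d+1}\tau}^{d+1}$, translate via the Cobordism Hypothesis into automorphism data, and then truncate to obtain $\mathbf{f}$. The paper's only substantive difference is that it makes the passage through the homotopy fiber explicit: invoking \cite[Theorem C.16]{VD:3d} it identifies the boundary theory with a functor $\hofib(d_{d+1}\tau)\to B\Aut_{\Om\cT}(\sigma^d_{X,\tau}(\pt))$, whose truncation along $\hofib(d_{d+1}\tau)\to BG$ is $\mathbf{f}$; this packaging makes the commuting square immediate (the $k$-invariant is tautologically the obstruction to extending the truncated map over $BG$ back to the full $d$-type, and the map from the fiber exhibits $d_{d+1}\tau$ as exactly that obstruction), so your final paragraph of worry is already handled by that formulation rather than requiring a separate universal-class argument.
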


\section{Symmetries of the base}
\label{sec:base}

Let $L$ be a finite group, and $\tau$ a group cocycle representing a class in
$H^3\left(L , \units{\bk}\right)$.
Let $\cC = \Vect\left[L\right]^\tau$ be the fusion category of vector spaces graded by
$L$, twisted by $\tau$.

In \Cref{sec:f}, we will construct an action by bimodules up to isomorphism:
\begin{equation*}
f \colon G \to \pi_0 \Aut_\Fus\left(\cC\right)
\end{equation*}
which is induced by an action of $G$ on the base:
\begin{equation*}
\phi \colon G \to \Aut\left(L , \tau\right)
\end{equation*}
where $\Aut\left(L , \tau\right)$ consists of group automorphisms of $L$ which preserve
the cohomology class of $\tau$.
This action can be twisted by a class $\gamma$ satisfying a certain equation. 

The map $f$ will lift to a map to the full $3$-group:
\begin{equation*}
\mathbf{f} \colon 
G\to \Aut_\Fus\left(\cC\right)
\end{equation*}
if and only if certain obstructions identified in \cite{ENO:homotopy} vanish, and are
trivialized. 
In \Cref{sec:m} and \Cref{sec:a}, we explicitly describe the data necessary to trivialize
these obstructions, which will be written as $\mu$ and $\alpha$ respectively. 
All together, given $L$ and $\tau$, the map $\mathbf{f}$ is determined by the map $\phi$
and the triple $\left(\gamma , \mu , \alpha\right)$.

Importantly, when the twist is trivial, the map $\phi$ canonically defines such a map
$\mathbf{f}$. This is the content of \Cref{thm:base}.

In \Cref{sec:semi_3d}, we describe a semiclassical action (\Cref{defn:semi}) associated to
the same data $\left(\phi , \gamma, \mu , \alpha\right)$. 
Furthermore, the quantization of this semiclassical action 
(i.e. image under the quantization functor \eqref{eqn:sum}) 
agrees with the map $\mathbf{f}$. 

\subsection{Action via bimodules up to isomorphism}
\label{sec:f}

Consider an arbitrary group homomorphism 
\begin{equation*}
\phi \colon G \to \Aut\left(L , \tau\right)
\end{equation*}
and a group $1$-cochain on $G$ valued in group $2$-cochains on $L$:
\begin{equation}
\gamma  \colon G \to C^2_\gp\left(L , \units{\bk} \right)
\ .
\label{eqn:gamma}
\end{equation}
If this satisfies the equation:
\begin{equation}
d_L\left(\gamma\left(g\right)\right) = \phi\left(g\right)\left(\tau\right)  \tau^{-1}
\ ,
\label{eqn:cond_gamma}
\end{equation}
then this defines a morphism in $\Fam_d$:
\begin{equation}
\begin{cd}
& \left(\Gamma\left(\phi\left(g\right)\right) , \gamma\left(g\right)\right)
\ar{dl}\ar{dr} & \\
\left(L , \tau\right) & & \left( L , \phi\left(g\right)\left(\tau\right)\right)
\end{cd}
\label{eqn:C_gamma}
\end{equation}
where $d_L$ is the group differential for $L$.
Note that such a $\gamma$ exists because the action $\phi$ 
preserves the cohomology class of $\tau$. 

Write $C_\gamma\left(g\right)$ for this $1$-morphism in $\Fam_d$.
Then we define:
\begin{equation}
f\left(g\right) \ceqq \Sum_d\left(C_\gamma\left(g\right)\right)
\label{eqn:f}
\end{equation}
where $\Sum_{d}$ is the quantization functor \eqref{eqn:sum}.

The $\cC$-bimodule $f\left(g\right)$ has the following description.
The underlying category is the same as $\cC$, and the left $\cC$-module structure is
simply the tensor product in $\cC$. 
The right action, say by some simple $\bk_\ell \in \cC$, is given by tensor product with 
$\bk_{\phi\left(\ell\right)}$.

The twist of this construction by the $2$-cocycle $\gamma\left(g\right)$ 
can be thought of as follows. 
Rather than defining $f\left(g\right)$ to be the trivial $\cC$-bimodule twisted by a group
automorphism of $L$, as above, we might consider twisting by a tensor autoequivalence
$\Phi$ of $\cC$. I.e. now an object $X\in \cC$ acts on the right by tensoring with
$\Phi\left(X\right)$.
The tensor autoequivalences form an extension:
\begin{equation*}
1 \to H^2\left(L , \units{\bk}\right) \to \pi_0 \Aut_\tp\left(\cC\right) \to \Aut\left(L ,
\tau\right) \to 1 \ ,
\end{equation*}
where $\Aut\left(L , \tau\right)$ denotes the group automorphisms of $L$ preserving the
cohomology class defined by the cocycle $\tau$.
See \cite[\S 11.2]{ENO:homotopy} for more.

\subsection{System of products}
\label{sec:m}

Now we would like to construct a system of products, i.e. equivalences:
\begin{equation*}
m\left(g,h\right) \colon f\left(h\right) \comp f\left(g\right) \lto{\sim} f\left(gh\right)
\end{equation*}
attached to all pairs $g,h\in G$.

We will obtain $m\left(g,h\right)$ by evaluating $\Sum_3$ on the 
correspondence between correspondences.
Note that for any $g,h\in G$ we have 
\begin{equation*}
\Gamma \left(\phi\left(g\right)\right) \comp \Gamma \left(\phi\left(h\right)\right)
= \Gamma \left(\phi\left(g\right) \comp \phi\left(h\right)\right)
= \Gamma\left(\phi\left(gh\right)\right)
\ .
\end{equation*}
so $f\left(h\right) \comp f\left(g\right)$ is the image of 
\begin{equation*}
\begin{tikzcd}
& \left(\Gamma\left(\phi\left(gh\right)\right) , \gamma\left(g\right) 
\gamma\left(h\right)\right) \ar{dr}\ar{dl}& \\
\left(L , \tau\right) && \left(L , \phi\left(gh\right) \left(\tau\right)\right)
\end{tikzcd}
\end{equation*}
under $\Sum_3$, and $f\left(gh\right)$ is the image of 
\begin{equation*}
\begin{tikzcd}
& \left(\Gamma\left(\phi\left(gh\right)\right) , \gamma\left(gh\right)\right) \ar{dl}\ar{dr}
& \\
\left(L , \tau\right) && \left(L , \phi\left(gh\right) \left(\tau\right)\right)
\end{tikzcd}
\end{equation*}
under $\Sum_3$.

Therefore we require a group 2-cochain on $G$ valued in $1$-cochains on $L$:
\begin{equation}
\mu \colon G^2  \to C^1_\gp\left(L , \units{\bk}\right)
\ .
\label{eqn:mu}
\end{equation}
If $\mu$ (and $\gamma$) satisfy:
\begin{equation*}
d_L \left(\mu\left(g , h\right)  \right) = 
\gamma\left(gh\right)
\gamma\left(g\right)^{-1} \gamma\left(h\right)^{-1} 
\end{equation*}
in $C^2\left(L , \units{\bk}\right)$
(where $d_G$ and $d_L$ are the differentials for the group cohomology of $G$ and $L$
respectively)
or more succinctly: 
\begin{equation}
d_L \mu = d_G \gamma^{-1} \ ,
\label{eqn:cond_mu}
\end{equation}
then this defines a $2$-morphism (correspondence of correspondences) in $\Fam_d$:
\begin{equation}
\begin{cd}
& \left(\Gamma\left(\phi\left(gh\right)\right) ,  \mu\left(g,h\right)\right)
\ar{dr}\ar{dl} & \\
\left(\Gamma\left(\phi\left(gh\right)\right) , \gamma\left(g\right)  \gamma\left(h\right)
\right)
&&
\left(\Gamma\left(\phi\left(gh\right)\right) , \gamma\left(gh\right) \right)
\end{cd}
\label{eqn:C_mu}
\end{equation}
Write $C_\mu\left(g,h\right)$ for this $2$-morphism in $\Fam_d$.
\begin{equation}
m\left(g,h\right) \ceqq \Sum_d\left(C_\mu\left(g,h\right)\right) \ .
\label{eqn:m}
\end{equation}

\subsubsection{The degree three obstruction}
\label{sec:base_O3}

Recall the obstruction $O_3\left(f\right) \in H^3_{\text{Grp}}\left(G , L\dsum
\pdual{L}\right)$ from \cite{ENO:homotopy}.
This is defined using $f$ and $m$, but only depends on $f$ \cite{ENO:homotopy}.
Recall $\gamma$ from \eqref{eqn:gamma} satisfying \eqref{eqn:cond_gamma}. 
Insisting that the obstruction $O_3\left(f\right)$ vanishes would put a further condition
on $\gamma$, however in the setting considered in this section, it will always vanish. 

Consider the cocycle:
\begin{equation*}
d_G \mu  \in Z^3_{\text{Grp}}\left(G , C^1\left(L\right)\right)\ .
\end{equation*}
In fact, this defines an element 
\begin{equation*}
d_G \mu \in 
Z^3_{\text{Grp}}\left(G , \pdual{L}\right)
\end{equation*}
where 
\begin{equation*}
\pdual{L} = \Hom\left(L , \units{\bk}\right) = Z^1\left(L , \units{\bk}\right)
\end{equation*}
because 
\begin{equation*}
d_L \comp d_G \left(\mu\right)= d_G \left( d_L \mu\right) = d_G \left(d_G \gamma
\right)^{-1} = 0
\end{equation*}
since $\gamma$ and $\mu$ satisfy \eqref{eqn:cond_mu}.

\begin{rmk}
This section is similar to the discussion of the same obstruction in 
\cite[\S 5.1]{CGPW}.
\end{rmk}

\begin{prop}
If $f$ is the group homomorphism defined by $\phi$ and $\gamma$ in \eqref{eqn:f}, then 
$O_3\left(f\right)$ vanishes.
\label{prop:base_O3}
\end{prop}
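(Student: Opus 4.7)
The plan is to construct, directly from the data $(\phi, \gamma)$ defining $f$, a system of products $m$ lifting $Bf$ through the truncation $\pi_{\leq 2} B\Aut_\Fus(\cC) \to B\pi_1$. By \Cref{rmk:O3_BrPic}, the existence of any such lift is equivalent to the vanishing of $O_3(f)$, so producing $m$ yields the proposition. Because $f$ is defined via quantization of the semiclassical correspondences $C_\gamma(g)$ of \eqref{eqn:C_gamma}, I would construct $m$ by first producing an honest 2-morphism in $\Fam_d$ witnessing $C_\gamma(h) \comp C_\gamma(g) \simeq C_\gamma(gh)$, and then applying the quantization functor $\Sum_d$.

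For each pair $(g, h) \in G \times G$, both $C_\gamma(h) \comp C_\gamma(g)$ and $C_\gamma(gh)$ are correspondences whose underlying space is the graph $\Gamma(\phi(gh))$; they differ only in the decorating 2-cocycle on $L$, namely $\gamma(g)\gamma(h)$ versus $\gamma(gh)$, with ratio the group 2-cochain $d_G \gamma$ valued in $C^2_\gp(L, \units{\bk})$. A direct computation using condition \eqref{eqn:cond_gamma} together with the multiplicativity of $\phi$ shows $d_L(d_G \gamma) = 1$, so that $d_G \gamma$ in fact takes values in $Z^2_\gp(L, \units{\bk})$. To obtain the desired 2-morphism $C_\mu(g,h)$ of \eqref{eqn:C_mu}, it then suffices to exhibit $\mu \in C^2_\gp(G, C^1_\gp(L, \units{\bk}))$ satisfying \eqref{eqn:cond_mu}, i.e.\ $d_L \mu = (d_G \gamma)^{-1}$; the collection $\{\Sum_d(C_\mu(g,h))\}$ then extends $Bf$ to the required map $BG \to \pi_{\leq 2} B\Aut_\Fus(\cC)$.

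The main obstacle is precisely the existence of such a $\mu$, which is equivalent to the vanishing of the class of $d_G \gamma$ in $C^2_\gp(G, H^2_\gp(L, \units{\bk}))$. The key observation is that $d_G \gamma$ is, by construction, the image under $d_G$ of the 1-cochain $\gamma \in C^1_\gp(G, C^2_\gp(L, \units{\bk}))$ in the double complex $C^\bullet_\gp(G, C^\bullet_\gp(L, \units{\bk}))$, and hence is cohomologically trivial from the perspective of the diagonal of this double complex. Unpacking the obstruction $O_3(f)$ via the identification of $\pi_2$ described in \Cref{rmk:Drinfeld_map}, I expect to match it precisely with this diagonal class and conclude its vanishing; the cochain $\mu$ can then be produced (non-canonically, reflecting the freedom that will be used in choosing the associator $\alpha$ in \Cref{sec:a}) from any $d_L$-trivialization of $(d_G \gamma)^{-1}$, and the proposition follows.
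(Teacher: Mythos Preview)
There is a genuine gap in your argument. You propose to deduce $O_3(f)=0$ by exhibiting a lift $BG \to \pi_{\leq 2}B\Aut_\Fus(\cC)$ built from the products $m(g,h) = \Sum_d(C_\mu(g,h))$. But producing isomorphisms $m(g,h)$ is not yet a lift: a map into the $2$-truncation requires the $m$'s to satisfy the associativity cocycle condition, and you do not verify this for your specific $m$'s. Since $O_3(f)$ is by definition the class measuring the failure of that condition (for any choice of $m$), your assertion that the collection $\{\Sum_d(C_\mu(g,h))\}$ ``extends $Bf$ to the required map'' is precisely the content of the proposition, not a route to it. Separately, you never establish that $\mu$ exists: the fact that $d_G\gamma$ is a $d_G$-coboundary in the double complex says nothing about pointwise $d_L$-exactness of $(d_G\gamma)^{-1}(g,h)$, which is what \eqref{eqn:cond_mu} demands; your final clause, producing $\mu$ ``from any $d_L$-trivialization of $(d_G\gamma)^{-1}$'', assumes exactly what is at issue.

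The paper's proof is structurally different and does not try to build the lift directly. It uses the splitting $\pi_2 \cong Z(L)\oplus \pdual{L}$ and argues that $O_3(f)$ lies simultaneously in the image of each summand under the inclusion into $H^3(G,Z(L)\oplus\pdual{L})$: containment in the $Z(L)$-summand is a general fact about actions arising through tensor autoequivalences, imported from \cite[\S 9.1]{ENO:homotopy}; containment in the $\pdual{L}$-summand is the identification of $O_3(f)$ with the image of $[d_G\mu]\in H^3(G,\pdual{L})$. Since these two images meet only in zero, $O_3(f)$ vanishes. Your double-complex intuition is not far from the second of these two inputs, but the first --- the external result pinning $O_3(f)$ to the $Z(L)$-component --- is what actually closes the argument, and it is absent from your proposal.
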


\begin{proof}
Recall that in this case the obstruction is in the following cohomology:
\begin{equation*}
O_3\left(f\right) \in H^3\left(G , Z\left(L\right)_{\text{Grp}} \dsum \pdual{L}\right)
\ .
\end{equation*}
As noted in \cite[\S 9.1]{ENO:homotopy}, this lies in the image of:
\begin{equation*}
H^3_{\text{Grp}}\left(G , Z\left(L\right)\right) \to H^3_{\text{Grp}}\left(G ,
Z\left(L\right)\dsum \pdual{L}\right)
\end{equation*}

The image of 
\begin{equation*}
\left[d_G \mu\right] \in H^3_{\text{Grp}}\left(G , \pdual{L} \right)
\end{equation*}
under the map 
\begin{equation*}
H^3_{\text{Grp}}\left(G , \pdual{L}\right) \to H^3_{\text{Grp}}\left(G ,  Z\left(L\right)
\dsum \pdual{L}\right)
\end{equation*}
induced by the inclusion $\pdual{L}\inj Z\left(L\right)\dsum \pdual{L} = \pi_2$ is the obstruction
$O_3\left(f\right)$ for $f$ defined in \eqref{eqn:f}.
I.e. $O_3\left(f\right)$ is trivializable.
\end{proof}

\subsection{Associators} 
\label{sec:a}

Now we would like to construct a system of associators, i.e. equivalences:
\begin{equation*}
a \left(g,h,k\right) \colon 
m\left(gh , k\right) \comp \left(m\left(g,h\right) \tp \id_{f\left(k\right)} \right) 
\lto{\sim} 
m\left(g , hk\right) \comp \left(\id_{f\left(g\right)} \tp m\left(h , k\right)\right)
\ .
\end{equation*}
The source of $a\left(g,h,k\right)$ will be the image of the following correspondence
(between correspondences between correspondences)
under $\Sum_3$:
\begin{equation*}
\begin{tikzcd}[row sep = small, column sep = 0]
&
\left(\Gamma\left(\phi\left(ghk\right)\right) , \mu\left(gh , k\right) \mu\left(gh\right)\right)
\ar{dr}\ar{dl}
&
\\
\left(\Gamma\left(\phi\left(ghk\right)\right) , \gamma\left(g\right) \gamma\left(h\right)
\gamma\left(k\right)\right) &&
\left(\Gamma\left(\phi\left(ghk\right)\right) , \gamma\left(ghk\right)  \right)
\end{tikzcd}
\end{equation*}
and the target will be the image of the following correspondence
(between correspondences between correspondences)
under $\Sum_3$:
\begin{equation*}
\begin{tikzcd}[row sep = small, column sep = 0]
&
\left(\Gamma\left(\phi\left(ghk\right)\right) , \mu\left(g,hk\right) \mu\left(hk\right)\right)
\ar{dr}\ar{dl}
&
\\
\left(\Gamma\left(\phi\left(ghk\right)\right) , \gamma\left(g\right) \gamma\left(h\right)
\gamma\left(k\right)\right) &&
\left(\Gamma\left(\phi\left(ghk\right)\right) , \gamma\left(ghk\right) \right)
\end{tikzcd}
\ .
\end{equation*}

Therefore we require a group $3$-cochain on $G$ valued in $C^0\left(L , \units{\bk}\right) =
\units{\bk}$:
\begin{equation}
\alpha \colon G^3 \to \units{\bk}
\ .
\label{eqn:alpha}
\end{equation}
The analogue of \eqref{eqn:cond_gamma} for $\gamma$ (and similarly of \eqref{eqn:cond_mu} for $\mu$) is
\begin{equation*}
d_L\left(\al\left(g,h,k\right)\right) 
= 
\mu\left(g , hk\right) \mu\left(h,k\right)
\mu\left(gh, k\right)^{-1} \mu\left(g,h\right)^{-1}
\end{equation*}
or more succinctly:
\begin{equation}
d_L \al = d_G \mu 
\label{eqn:cond_al}
\end{equation}
in $C^1\left(L , \units{\bk}\right)$. 
Since $\al\left(g,h,k\right)$ is a $0$-cocycle on $L$ (in cohomology for the trivial
action of $L$ on $\units{\bk}$), it is always closed. 
I.e. the condition reduces to 
\begin{equation}
d_G \mu = 0 \ .
\label{eqn:cond_al_0}
\end{equation}

The $3$-cochain $\alpha$ defines a correspondence (between correspondences of
correspondences):
\begin{equation*}
\begin{tikzcd}[row sep = small, column sep = -10pt]
&
\left(\Gamma\left(\phi\left(ghk\right)\right) , \al\left(g,h,k\right)\right)
\ar{dl}\ar{dr}
&
\\
\left(\Gamma\left(\phi\left(ghk\right)\right) , \mu\left(gh , k\right) \mu\left(gh\right)\right)
&&
\left(\Gamma\left(\phi\left(ghk\right)\right) , \mu\left(g,hk\right) \mu\left(hk\right)\right)
\end{tikzcd}
\label{eqn:C_alpha}
\end{equation*}
i.e. a $3$-morphism in $\Fam_d$, which we write as $C_\al\left(g,h,k\right)$. 
Finally, the associator is defined by:
\begin{equation}
a\left(g,h,k\right) \ceqq \Sum_d\left(C_\al \left(g,h,k\right)\right) \ .
\label{eqn:a}
\end{equation}

\subsubsection{Degree four obstruction}
\label{sec:base_O4}

Recall the obstruction $O_4\left(f , m\right) \in H^4_{\text{Grp}}\left(G , \units{\bk}\right)$. 
This is defined using $f$, $m$, and $a$, but only depends on $f$ and $m$
\cite{ENO:homotopy}.
When $f$ is the map defined in \eqref{eqn:f}, the system of products $m$ is the one
defined in \eqref{eqn:m}, and the associator $a$ is the one defined in \eqref{eqn:a}, we
have that the obstruction class is simply:
\begin{equation*}
O_4\left(f,m\right) = \left[d_G \alpha\right] \in H^4_{\text{Grp}}\left(G , \units{\bk}\right) \ ,
\end{equation*}
which always vanishes.

\subsubsection{Fully coherent action induced by an automorphism of the base}
\label{sec:coherent_base}

Let $L$ and $G$ be finite groups.
Let $\tau \in Z^3_{\text{Grp}}\left(L , \units{\bk}\right)$ be a $3$-cocycle in the group
cohomology of $L$ with coefficients in $\units{\bk}$, and consider an arbitrary group
homomorphism
\begin{equation*}
\phi \colon G\to \Aut\left(L , \tau\right) \ ,
\end{equation*}
where $\Aut\left(L , \tau\right)$ denotes the group of automorphisms preserving the
cohomology class of $\tau$.

\begin{theorem}
Assume \Cref{hyp:sigma}.
Let $L$ and $G$ be finite groups.
Let $\tau \in Z^3_{\text{Grp}}\left(L , \units{\bk}\right)$ 
be a $3$-cocycle in the group cohomology of $L$.
Then any group homomorphism 
\begin{equation*}
\phi \colon G\to \Aut\left(L , \tau\right)
\end{equation*}
canonically defines a
$\left(\sigma_{BG}^{4} , \rho\right)$-module structure on  $\sigma_{BL , \tau}^3$. 

This canonical module structure can be twisted by a triple
$\left(\gamma , \mu , \alpha\right)$ where 
\begin{align*}
\gamma \in C^1_{\text{Grp}} \left(G ,C^2_{\text{Grp}} \left(L, \units{\bk}\right) \right)
&&
\mu \in Z^2_{\text{Grp}}\left(G , C^1_{\text{Grp}} \left(L , \units{\bk} \right)\right)
&&
\alpha \in Z^3\left(G , \units{\bk}\right)
\ .
\end{align*}
satisfy:
\begin{align*}
d_L\left(\gamma\left(g\right)\right) = \phi\left(g\right)\left(\tau\right)  \tau^{-1}  &&
d_L \mu = d_G \gamma^{-1} 
\end{align*}
\label{thm:base}
\end{theorem}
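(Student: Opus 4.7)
The plan is to assemble the theorem from the pieces already constructed in the preceding subsections. Concretely, given $(\phi, \gamma, \mu, \alpha)$ satisfying the stated cocycle equations, Sections \ref{sec:f}, \ref{sec:m}, and \ref{sec:a} already produce, via the quantization map $\Sum_d$ from \Cref{hyp:sigma}, the triple
\begin{equation*}
\bigl(f,\, m,\, a\bigr) \;=\; \bigl(\Sum_d(C_\gamma),\, \Sum_d(C_\mu),\, \Sum_d(C_\alpha)\bigr)
\end{equation*}
consisting of a map on $\pi_0$, a system of products, and an associator landing in $\Aut_\Fus(\cC)$. First I would verify that the cocycle conditions \eqref{eqn:cond_gamma}, \eqref{eqn:cond_mu}, and \eqref{eqn:cond_al_0} are exactly what is needed for the three correspondence diagrams to be well-formed $1$-, $2$-, and $3$-morphisms in $\Fam_d$, as these conditions encode the trivialization equation \eqref{eqn:triv} at each categorical level.

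Next I would invoke the obstruction theory of \Cref{sec:obstruction}. By \Cref{prop:base_O3} the degree-three obstruction $O_3(f)$ vanishes because $d_G\mu$ lies in $Z^3_{\text{Grp}}(G, \pdual{L})$ and the image of such classes in $H^3(G, Z(L)\dsum\pdual{L})$ hits precisely $O_3(f)$ — and this is manifestly zero as soon as $\mu$ exists with $d_L\mu = d_G\gamma^{-1}$. For the degree-four obstruction, \Cref{sec:base_O4} identifies $O_4(f,m)$ with $[d_G\alpha] \in H^4_{\text{Grp}}(G,\units{\bk})$, which vanishes tautologically by \eqref{eqn:cond_al_0} together with the fact that $\alpha$ is promoted to a $3$-cocycle. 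Thus the Postnikov obstructions from \cite{ENO:homotopy} are all trivialized by the explicit choices $(\gamma,\mu,\alpha)$, and we obtain an honest functor
\begin{equation*}
\mathbf{f}\colon BG \longrightarrow B\Aut_\Fus(\cC) \;=\; B\Aut_\Fus\bigl(\sigma_{BL,\tau}^3(\pt)\bigr)\ .
\end{equation*}

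To pass from this functor to a $(\sigma_{BG}^4,\rho)$-module structure on $\sigma_{BL,\tau}^3$, I would apply \Cref{hyp:sigma} in the form described at the end of the introduction: a functor $BG\to B\Aut_\Fus(\cC)$ is, by \Cref{prop:sigma_mod} and the discussion of \Cref{sec:mod_str}, equivalent to a left boundary theory $F_\sigma\colon 1\to \sigma_{BG}^4$ whose composition with $\rho$ recovers $\sigma_{BL,\tau}^3$. Concretely, the correspondence $\pt\leftarrow C_G\to BG$ (with $C_G = B(L\rtimes_\phi G)$ from \Cref{exm:semi}, decorated by an extension $\tau_G$ of $\tau$ guaranteed by \cite[\S 11]{ENO:homotopy}) quantizes under \Cref{prop:corr_mor} to exactly the required module structure.

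The canonical statement then amounts to noting that when one takes the universal/trivial choice $(\gamma,\mu,\alpha) = (1,1,1)$ — which is admissible whenever $\phi$ preserves $\tau$ on the nose, and admissible up to cohomology in general by the hypothesis $\phi\colon G\to\Aut(L,\tau)$ — the construction is functorial in $\phi$ and independent of auxiliary choices up to the twisting parameterization in the statement. The main subtlety, and the only step that is not purely bookkeeping, is ensuring that the vanishing of $O_4(f,m) = [d_G\alpha]$ really is witnessed by the explicit trivialization $\alpha$ rather than merely by cohomological triviality; this is what lets $\mathbf{f}$ be built coherently rather than merely existing abstractly, and it is exactly what \eqref{eqn:cond_al} provides.
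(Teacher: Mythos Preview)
Your proposal is correct and follows essentially the same approach as the paper: assemble $(f,m,a)$ from the constructions in \S\ref{sec:f}--\ref{sec:a}, invoke \Cref{prop:base_O3} and \S\ref{sec:base_O4} to trivialize the Postnikov obstructions, obtain the functor $\mathbf{f}\colon BG\to B\Aut_\Fus(\cC)$, and then translate this into a $(\sigma_{BG}^4,\rho)$-module structure. The only minor discrepancy is that the paper's proof cites the external result \cite[Theorem C.16]{VD:3d} for the final translation step, whereas you appeal to \Cref{prop:sigma_mod} and \Cref{sec:mod_str}; your reference is slightly imprecise (that proposition classifies boundary theories, not module structures per se), but you also correctly point to the statement in the introduction, which is what the paper itself relies on.
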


\begin{proof}
The canonical structure will correspond to trivial $\gamma$, $\mu$, and $\alpha$. 
We will show that we obtain a module structure for arbitrary $\left(\gamma,  \mu ,
\alpha\right)$.

$\phi\left(g\right) \left(\tau\right) \tau^{-1}\in Z^2_{\text{Grp}} \left(L ,
\units{\bk}\right)$ defines the trivial cohomology class (since the action determined by
$\phi$ preserves the cohomology class of $\tau$). 
Therefore any $\gamma$ satisfies \eqref{eqn:cond_gamma}.
The fact that $O_3\left(f\right)$ for the group homomorphism $f$ defined by $\phi$ and
$\gamma$ follows from \Cref{prop:base_O3}.
As in \Cref{sec:base_O4}, the obstruction $O_4\left(f,m\right) = 0$ vanishes for all such
$\tau$ and $\phi$, and for all $\gamma$ and $\mu$.

Combining the group homomorphism $f$ from \eqref{eqn:f} 
(defined by $\phi$ and $\gamma$)
the system of products $m$ from \eqref{eqn:m} 
(defined by $\mu$)
and the associator $a$ from \eqref{eqn:a} 
(defined by $\al$), 
we obtain a map of $3$-groups:\footnote{Here $G$ denotes the discrete $2$-category with
objects $G$.}
\begin{equation}
\mathbf{f} = 
\left(f,m,a\right) \colon G \to \Aut_{\Fus}\left(\cC\right) \ .
\label{eqn:fma}
\end{equation}

By the cobordism hypothesis, this classifies a representation of 
$\Bord^{BG}_3$ in $\Fus$.
By \Cref{prop:sigma_mod}, this equivalently defines a $\left(\sigma_{BG}^{d+1} ,
\rho\right)$-module structure on $\sigma_{X , \tau}^d$.
\end{proof}

\begin{rmk}
We consider semiclassical actions 
$BG \from C_G  \to \pt$ of $G$ on $BL$ 
(\Cref{defn:semi}) of a very special form in this section:
\begin{equation*}
C_G = B\left(L\rtimes_\phi G\right) \ .
\end{equation*}
Arbitrary such semiclassical actions $C_G$ are equivalent to arbitrary group extension of
$G$ by $L$, which are classified by degree $2$ nonabelian group cohomology of $G$ with
coefficients in $L$. 

Even if $L$ is abelian, extensions given by semidirect products are still quite
restrictive:
For every $\phi$, there is an extension associated to every class
in $H^2_{\text{Grp} , \phi}\left(G , L\right)$, where this is group cohomology with 
the action $\phi$ of $G$ on $L$.

Such, more general, considerations are made in 
in \cite[\S 9.1,\S 11]{ENO:homotopy}. 
Namely, 
\begin{equation*}
\phi\left(gh\right)^{-1} \comp
\phi\left(g\right) \comp \phi\left(h\right)
\end{equation*}
might be conjugation by some element $n\left(g,h\right) \in L$.
This defines a class $\left[n\right] \in H^2\left(B G , L\right)$.

Once we have this twist $n$, the extension $L\rtimes_\phi G$
is replaced by the group extension determined by the pair 
$\left(\phi , n\right)$.
This introduces a possible nonzero obstruction class $O_4\left(f , m\right)$.
\label{rmk:ENOM}
\end{rmk}

\subsection{Semiclassical action}
\label{sec:semi_3d}

In this section we will construct a semiclassical action of $G$ on $\left(BL ,
\tau\right)$, from the same data as above: an action $\phi$ and a triple 
$\left( \gamma, \mu , \alpha\right)$.
The quantization of this semiclassical action 
(i.e. image under the quantization functor $\Sum_3$ in \eqref{eqn:sum}) 
will be equivalent to the map 
$\left(f,m,a\right)$ in \eqref{eqn:fma}. 

Note that the map $\phi$ canonically defines a semiclassical action of $G$ on $BL$ 
(\Cref{exm:semi}), and recall the anomaly/obstruction to the existence of a semiclassical
action of $G$ on $\left(BL , \tau\right)$ studied in \Cref{sec:ss}.
In this case, the space $X$ is $BL$, so the Serre spectral sequence considered in
\Cref{sec:ss} reduces to the Lyndon-Hochschild-Serre (LHS) spectral sequence associated to
the normal subgroup $L\inj L\rtimes_\phi G$:
\begin{equation*}
E^{p,q}_2 = H^p_{\text{Grp}}\left(G , H^q\left(L , \units{\bk}\right)\right) \Rightarrow
H^{p+q}_{\text{Grip}} \left(L\rtimes_\phi G , \units{\bk}\right)
\ .
\end{equation*}
See \cite{Mul:thesis,MS:anomalies,KT:discrete_anomalies} and \cite[\S 11.8]{ENO:homotopy}
for more on the LHS spectral sequence as a tool for encoding anomalies.

The following follows from \cite[\S 11.8]{ENO:homotopy}.

\begin{prop}
A map $\left(f,m,a\right)$ as in
\eqref{eqn:fma} equivalently defines an extension of the cocycle $\tau$ to 
a cocycle $\tau_G$ on $L\rtimes_\phi G$ which agrees with $\tau$ when restricted to $L$.
\label{prop:ENOM}
\end{prop}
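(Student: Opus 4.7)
The plan is to identify the quadruple $(\tau, \gamma, \mu, \alpha)$ with the components of a total $3$-cocycle on the Lyndon–Hochschild–Serre (LHS) double complex computing $H^*_{\text{Grp}}(L \rtimes_\phi G, \units{\bk})$. I would set up the bicomplex
\begin{equation*}
C^{p,q} = C^p_{\text{Grp}}\bigl(G, C^q_{\text{Grp}}(L, \units{\bk})\bigr)
\end{equation*}
with vertical differential $d_L$ (group cohomology of $L$ with trivial coefficients) and horizontal differential $d_G$ (incorporating the action $\phi$ of $G$ on the coefficients), whose totalization computes $H^*_{\text{Grp}}(L \rtimes_\phi G, \units{\bk})$. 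A total $3$-cochain decomposes as
\begin{equation*}
\tau_G = \tau_{0,3} + \tau_{1,2} + \tau_{2,1} + \tau_{3,0},
\end{equation*}
and I would identify $\tau_{0,3} = \tau$, $\tau_{1,2} = \gamma$, $\tau_{2,1} = \mu$, $\tau_{3,0} = \alpha$. The restriction of $\tau_G$ to the normal subgroup $L \hookrightarrow L \rtimes_\phi G$ then picks out precisely $\tau_{0,3} = \tau$, as required.

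Next, I would expand the total cocycle condition $(d_G \pm d_L)\tau_G = 0$ by bidegree in total degree $4$, obtaining the five equations
\begin{equation*}
d_L \tau = 0, \quad d_G \tau = d_L \gamma, \quad d_G \gamma = d_L \mu, \quad d_G \mu = d_L \alpha, \quad d_G \alpha = 0.
\end{equation*}
Since $d_L$ vanishes on $C^0(L, \units{\bk}) = \units{\bk}$ with trivial $L$-action, the fourth equation collapses to $d_G \mu = 0$. The first is the closure of $\tau$; the second, since $(d_G \tau)(g) = \phi(g)(\tau) \cdot \tau^{-1}$, is exactly the condition $d_L\gamma(g) = \phi(g)(\tau)\tau^{-1}$ from \Cref{thm:base}; the third reproduces $d_L \mu = d_G \gamma^{-1}$ up to multiplicative/additive sign convention; and the last two express that $\mu \in Z^2_{\text{Grp}}(G, C^1_{\text{Grp}}(L, \units{\bk}))$ and $\alpha \in Z^3_{\text{Grp}}(G, \units{\bk})$. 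Thus the hypotheses of \Cref{thm:base} are equivalent to the assertion that $\tau_G$ is a total $3$-cocycle restricting to $\tau$ on $L$.

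For the converse direction, I would take any extension $\tau_G$ and decompose it along the bidegree filtration to recover such a quadruple, unique up to the coboundary ambiguity that corresponds exactly to the gauge freedom in the choice of $(\gamma,\mu,\alpha)$. Finally, to match the resulting $\tau_G$ with the map $\mathbf{f} = (f,m,a)$, I would observe that applying $\Sum_d$ to the ``total space'' correspondence $BG \leftarrow B(L \rtimes_\phi G) \to \pt$ decorated by $\tau_G$ reproduces, piece by piece, the correspondences $C_\gamma$, $C_\mu$, $C_\alpha$ of \eqref{eqn:C_gamma}, \eqref{eqn:C_mu}, \eqref{eqn:C_alpha} that were used to construct $f$, $m$, $a$ in \Cref{sec:f,sec:m,sec:a}; this identification, and the explicit matching with the obstruction-theoretic picture of \cite{ENO:homotopy}, is carried out in \cite[\S 11.8]{ENO:homotopy}. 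The main obstacle will be pinning down the sign conventions and checking that the freedom in trivializing the obstruction classes $O_3(f)$ and $O_4(f,m)$ (both of which vanish here, by \Cref{prop:base_O3} and \Cref{sec:base_O4}) corresponds precisely to the $d$-coboundary freedom in the bidegree pieces of $\tau_G$; modulo this bookkeeping, the equivalence is a direct dictionary between the two presentations.
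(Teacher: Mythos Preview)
The paper does not supply its own proof of this proposition: it simply records that the statement follows from \cite[\S 11.8]{ENO:homotopy}. Your proposal is therefore not in competition with a proof in the paper; rather, you have written out the content behind that citation. Your identification of $(\tau,\gamma,\mu,\alpha)$ with the bigraded components of a total $3$-cochain on the LHS double complex for the split extension $L \rtimes_\phi G$, and your matching of the total cocycle equations with the conditions \eqref{eqn:cond_gamma}, \eqref{eqn:cond_mu}, \eqref{eqn:cond_al_0}, is exactly the argument of \cite[\S 11.8]{ENO:homotopy}, and is correct.

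One point worth tightening: you assert without comment that the totalization of $C^p_{\text{Grp}}(G, C^q_{\text{Grp}}(L,\units{\bk}))$ with differential $d_G \pm d_L$ computes $H^*_{\text{Grp}}(L\rtimes_\phi G,\units{\bk})$. This is true precisely because the extension is \emph{split}; for a general extension the associated graded of the LHS filtration acquires higher components and the double complex picture is more complicated. Since the paper explicitly restricts to the semidirect-product case (cf.\ \Cref{rmk:ENOM}), you are fine, but it would strengthen the write-up to say so.
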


Therefore, we have a $1$-morphism in the category $\Fam_4\left(B^4 \units{\bk}\right)$:
\begin{equation*}
\begin{cd}
& \left(B\left(L\rtimes_\phi G\right) , \tau_G \right)\ar{dr}\ar{dl} & \\
BG && \pt
\end{cd}
\end{equation*}
By \Cref{prop:corr_mor}, this induces a domain wall:
\begin{equation*}
F_G\colon
1 \to \sigma_{BG}^{d+1} \ ,
\end{equation*}
such that  when we pair with the regular boundary condition, we obtain
$\sigma_{BL}^{d+1}$. 

By \Cref{prop:sigma_mod}, $F_G$ is equivalent to a representation of $\Bord^{BG}_3$.
By the cobordism hypothesis, this is equivalent to a functor 
\begin{equation*}
F_G\colon BG \to B\Aut_\Fus\left(\cC\right) \ .
\end{equation*}
This functor $F_G$ provides another description of the functor $\left(f,m,a\right)$.

\section{Orthogonal symmetries}
\label{sec:orthogonal}

\subsection{Canonical kernels}
\label{sec:BKS}

Let $\left(A , q\right)$ be a finite abelian metric group (i.e. a finite abelian group
equipped with nondegenerate quadratic form $q \colon A\to \units{\bk}$) with two
Lagrangian subgroup $L , K\subset A$. Then we have a correspondence of groups:
\begin{equation*}
\begin{cd}
& A / \left(L\cap K\right) 
\ar{dr}{ / \left(L / L\cap K\right)} 
\ar{dl}{ / \left(K / L\cap K\right)} &\\
A / K && A / L
\end{cd}
\end{equation*}
If $L\cap K = \left\{1\right\}$, i.e. they are transverse, then this takes the simpler
form:
\begin{equation}
\begin{cd}
& A \ar{dl}\ar{dr} & \\
L\simeq  A / K && A / L \simeq K
\end{cd}
\label{eqn:corr_trans}
\end{equation}

Decorating the correspondence \eqref{eqn:corr_trans} with any $2$-cocycle $\kappa$ on $A$
defines a $1$-morphism in $\Fam_3\left(B^3\units{\bk}\right)$ (recall \Cref{sec:sigma}).
We will primarily make use of the correspondence \eqref{eqn:corr_trans} when $K$ is the image
of $L$ under some $g\in \O\left(A , q\right)$. Continue to assume that $gL\cap L$ is
trivial, so that \eqref{eqn:corr_trans} decorated by $\kappa$ is the $1$-morphism 
\begin{equation*}
\begin{cd}
& \left(A , \kappa\right) \ar{dr}\ar{dl} & \\
L && gL
\end{cd}
\end{equation*}
in $\Fam_3\left(B^3\units{\bk}\right)$.

Assume there exists a polarization $A\simeq L\dsum \pdual{L}$, and that $q$ is given by
evaluation:
\begin{equation*}
\ev \colon A \simeq L\dsum \pdual{L} \to \units{\bk}
\ .
\end{equation*}
There is a canonical class in $H^2\left(L\dsum \pdual{L}\right)$ corresponding to the
Heisenberg extension. 
Let $\kappa_{\ev}$ be any $2$-cocycle representing the canonical class. 
For instance, one particular $2$-cocycle representing this class sends a pair of elements 
$\left(\ell , \phi\right)$ and $\left(k , \psi\right)$ of $L\dsum \pdual{L}$ to
$\psi\left(\ell\right)$.
This defines a $1$-morphism in $\Fam_3\left(B^3\units{\bk}\right)$:
\begin{equation}
\begin{cd}
& \left(A , \kappa_{\ev} \right)\ar{dl}{p_1}\ar{dr}{p_2} & \\
L && gL
\end{cd}
\label{eqn:corr_kappa}
\end{equation}
for any $g\in \O\left(L\dsum \pdual{L}\right)$ such that $gL\cap L$ is trivial.

\begin{rmk}
This cocycle $\kappa_{\ev}$ should be thought of as an analogue of the BKS
kernel in ordinary geometric quantization \cite{LV:Weil}.
\end{rmk}

\subsection{Higher intertwining operators}
\label{sec:intertwining}

The correspondences built in \Cref{sec:BKS} induce domain walls between the associated
TQFTs. Assume \Cref{hyp:sigma} so that there are well-defined TQFTs
\begin{equation*}
\sigma_{BL}^3 \colon \Bord_3^{\fra} \to \Fus 
\end{equation*}
for any finite abelian group $L$ defined using the functor $\Sum_3$. 

Let $g\in \O\left(L \dsum \pdual{L}\right)$ such that $gL \cap L$ is trivial. 
Applying \Cref{prop:corr_mor} to \eqref{eqn:corr_kappa}, we obtain a domain wall:
\begin{equation*}
\sigma_{BL}^3 \to \sigma_{B\left(gL\right)}^3  \ .
\end{equation*}
Fix some isomorphism
\begin{equation*}
\io \colon g L \simeq A / L = \pdual{L} \ .
\end{equation*}
Composing \eqref{eqn:corr_kappa} with $\io$ gives the following  correspondence:
\begin{equation}
\begin{cd}
& \left(A , \kappa_q \right) \ar{dr}{\io \comp p_2}\ar{dl}{p_1} & \\
L && \pdual{L}
\end{cd}
\label{eqn:corr_FT}
\end{equation}

\begin{defn}
The twice-categorified Fourier transform is the domain wall 
obtained by applying \Cref{prop:corr_mor} to the correspondence 
\eqref{eqn:corr_FT}:
\begin{equation*}
\FT \colon \sigma_{BL}^3 \to \sigma_{B\pdual{L}}^3 \ .
\end{equation*}
\label{defn:FT}
\end{defn}

\subsection{Evaluation on the point}
\label{sec:orthogonal_pt}

\subsubsection{As a
\texorpdfstring{$\Vect\left[L\right]$}{Vect[L]}-\texorpdfstring{$\Rep\left(L\right)$}{Rep(L)}-bimodule}

Recall from \Cref{exm:sigma_BG} (\Cref{fus}) the TQFT $\sigma_{BA}^3$ for any abelian
group sends the point to the fusion category $\Vect\left[A\right]$.
The natural transformation $\FT$, evaluated on the point, is therefore a
$\Vect\left[L\right]$-$\Vect\left[\pdual{L}\right]$-bimodule category. 
As it turns out (shown in \Cref{prop:FT_VectRep}) this is an
invertible bimodule category.

Recall that there is an equivalence of monoidal categories:
\begin{equation}
\Rep\left(L\right) \cong 
\Vect\left[\pdual{L}\right] 
\end{equation}
given by spectrally decomposing representations of $L$. 
Therefore $\FT\left(\pt\right)$ is equivalently an invertible 
$\Vect\left[L\right]$-$\Rep\left(L\right)$-bimodule category. 
Recall that here $\Rep\left(L\right)$ has the symmetric-monoidal tensor product, whereas
the monoidal structure on 
$\Vect\left[L\right]$ is given by convolution. In other words:

\begin{prop}
The twice-categorified Fourier transform (evaluated on a point)
is a Morita equivalence between
$\Vect\left[L\right]$ and $\Rep\left(L\right)$, 
i.e it exchanges convolution with the symmetric-monoidal tensor product. 
It also has a single simple object, i.e. is $\Vect$.
\label{prop:FT_VectRep}
\end{prop}

\begin{proof}
Recall the classification of $\Vect\left[L\right]$-$\Vect\left[\pdual{L}\right]$-bimodule
categories is equivalent to the classification of $\Vect\left[L\dsum
\pdual{L}\right]$-module categories. The latter are explicitly classified, e.g. in
\cite{ENO:homotopy} by subgroups of $L\dsum \pdual{L}$ equipped with alternating
bicharacters. 
In particular, the module category (and therefore bimodule category) corresponding to a
given subgroup $H$ has $\abs{L\dsum\pdual{L}} / \abs{H}$ simple objects. 
In this case the subgroup $H$ is the entire group, so there is a single simple object. 

The fact that this is an equivalence follows from the invertibility of the bimodule
category, which follows from \Cref{prop:FT_VectVect}.
\end{proof}

\begin{rmk}
The Morita equivalence in \Cref{prop:FT_VectRep} already appears in 
\cite[Example 7.12.19]{EGNO:TC} and \cite[Proposition 3.14]{FT:ising}.
\end{rmk}

\subsubsection{As a \texorpdfstring{$\Vect\left[L\right]$}{Vect[L]}-bimodule}

Choose some group isomorphism
\begin{equation}
\sigma \colon L\lto{\sim} \pdual{L}
\label{eqn:sigma}
\end{equation}
such that 
\begin{equation}
\sigma\left(\ell\right) \left(k\right) = \sigma\left(k\right) \left(\ell\right)
\label{eqn:symm}
\end{equation}
for all $k , \ell\in L$.
Define the corresponding orthogonal transform 
$T_\sigma \in \O\left(L\dsum \pdual{L}\right)$ by
\begin{equation*}
T_\sigma = 
\begin{pmatrix}
0 & \sigma^{-1} \\
\sigma & 0 
\end{pmatrix}
\end{equation*}
Note that this preserves the quadratic form because \eqref{eqn:symm} is satisfied:
\begin{equation*}
\ev\left(
T_\sigma\left(\ell , \phi\right)
\right)
= \sigma\left(\ell\right) \left[\sigma^{-1}\left(\phi\right)\right]
= \sigma\left(\sigma^{-1} \phi\right) \left(\ell\right) 
= \phi\left(\ell\right) = \ev\left(\ell , \phi\right)
\end{equation*}

\begin{rmk}
This is a noncanonical choice!
For any abelian group $L$, such 
a symmetric equivalence $\sigma$ always exists, which can be seen 
e.g. componentwise in the primary decomposition of $L$.
Furthermore, the orthogonal operators $T_\sigma$ and $T_{\sigma'}$
defined by any two such choices are related by conjugation.\footnote{Explicitly, the
automorphism of $L$ given by $\sigma^{-1}\comp \sigma'$ defines an orthogonal
transformation of $L\dsum \pdual{L}$. Conjugation by this element relates $T_\sigma$ to
$T_\sigma'$.}
\end{rmk}

Composing the correspondence \eqref{eqn:corr_FT}
with the equivalence \eqref{eqn:sigma}, we obtain a correspondence:
\begin{equation}
\begin{cd}
& \left(L\dsum \pdual{L} , \kappa_{\ev} \right) \ar{dr}{\sigma^{-1}\comp \iota \comp p_2} 
\ar{dl}{p_1} & \\
L && L
\end{cd}
\label{eqn:corr_LL}
\end{equation}
Equivalently, we can consider the correspondence:
\begin{equation}
\begin{cd}
& \left(L\dsum L , \kappa_{L} \right) \ar{dr}{p_2} 
\ar{dl}{p_1} & \\
L && L
\end{cd}
\label{eqn:corr_LL2}
\end{equation}
where $\kappa_{L}$ is a $2$-cocycle on $L^2$ representing the class
in $H^2_{\text{Grp}}\left(L^2 , \units{\bk}\right)$ corresponding to the 
skew-symmetric bicharacter $\alt \kappa_L$ (see \Cref{sec:schur}) sending 
\begin{equation}
\left(\ell_1 , k_1\right)  , \left(\ell_2 , k_2\right)
\mapsto 
\sigma\left(k_2\right)\left(\ell_1\right)^{-1}
\sigma\left(k_1\right)\left(\ell_2\right)
\ .
\label{eqn:kappa_L}
\end{equation}
We obtain this formula by post-composing the 
$2$-cocycle $\kappa_{\ev}$ with $\sigma$ and
the inversion map, following by the alternating map from 
\eqref{eqn:alt_equiv} to obtain the alternating bicharacter. 
Write $C_{L,L}$ for the correspondence in \eqref{eqn:corr_LL}
(or equivalently \eqref{eqn:corr_LL2}).

Recall the quantization map $\Sum_3$ from \Cref{hyp:sigma},
in particular the explicit description of $\Sum_3$ in 
\Cref{exm:sigma_BG} (\Cref{fus,bimodule_cats}).

\begin{prop}
The bimodule category $\Sum_3\left(C_{L,L}\right)$ has a single simple object, 
and is of order $2$. 
\label{prop:FT_VectVect}.
\end{prop}

\begin{proof}
The bimodule category $\Sum_3\left(C_{L,L}\right)$ 
corresponds to a $\Vect\left[L^2\right]$-module category under the usual correspondence
described in \Cref{sec:schur}.

Recall that module categories over $\Vect\left[L^2\right]$ are 
are classified by subgroups of $L^2$ equipped with alternating bicharacters. 
The subgroup corresponding to $\Sum_3\left(C_{L,L}\right)$
is the whole group $L^2$, which is enough to see that the (bi)module category
has a single simple object, since simple objects are in
correspondence with cosets of the subgroup in $L^2$ \cite[\S 2.7]{ENO:homotopy}.

The alternating bicharacter is $\alt\kappa_L$ defined in \eqref{eqn:kappa_L}.
Alternating bicharacters on an abelian group are equivalent to degree $2$ cohomology
classes on the group (see \Cref{sec:schur}).
Recall the Schur isomorphism (\eqref{eqn:schur}) in this case:
\begin{equation}
H^2\left(L^2\right) \simeq H^2\left(L\right) \dsum H^2\left(L\right) \dsum 
\Hom\left(L \tp_\ZZ L, \units{\bk}\right) 
\end{equation}
where the third factor consists of bicharacters on $L$. 
The cohomology class of $\kappa_L$ is trivial in the first two factors, and $\sigma$ in
the third factor.

By \Cref{lem:mod_bimod}, the bimodule therefore has the following explicit description. 
The underlying category is $\Vect$ and the action of $\Vect\left[L\right]$ on either side
is trivial. The only nontrivial data is the middle associator,
a bicharacter on $L$ sending:
\begin{equation*}
\left(\ell , k\right) \mapsto \sigma\left(k\right) \left(\ell\right) \ .
\end{equation*}
As is explained in \cite[Prop. 9.3]{ENO:homotopy}, this bimodule category is invertible on
account of the fact that $\sigma$ is an isomorphism, and it is order $2$ on account of the
fact that $\sigma$ satisfies \eqref{eqn:symm}.
\end{proof}

\subsection{Full orthogonal group}
\label{sec:orthogonal_full}

It is shown in \cite{ENO:homotopy} that
the group of invertible $\Vect\left[L\right]$-bimodules 
is equivalent to the group $\O\left(L\dsum \pdual{L}\right)$.
This equivalence is given a description in \cite[\S 10]{ENO:homotopy} via
the hyperbolic part of the category of Lagrangian correspondences.

Namely, they assign the following $\Vect\left[L^2\right]$-module category to $g\in
\O\left(L\dsum \pdual{L} , \ev\right)$. 
Recall $\Vect\left[L^2\right]$-module categories are classified by a subgroup $H\subset
L^2$ and a class in $H^2\left(L^2 , \units{\bk}\right)$, or equivalently 
an alternating bicharacter on $L^2$ (see \Cref{sec:schur}).
The subgroup $H$ is the projection of the graph of $g$ to $L^2 \subset \left(L\dsum
\pdual{L}\right)^2$.
The alternating bicharacter $\chi_g$ is defined by:
\begin{equation*}
\chi_g\left( \left(\ell_1 , k_1\right) , \left(\ell_2 , k_2\right)\right)
\ceqq 
\lr{ \widetilde{ \left(\ell_1 , k_1\right)} , \left(\ell_2 , 1 , k_2 ,
1\right)}_{q} 
\end{equation*}
where $\widetilde{\left(\ell_1 , k_2\right)}$ denotes an arbitrary lift of $\left(\ell_1 ,
k_2\right)$ to the graph of $g$ in $\left(L\dsum \pdual{L}\right)^2$
and $q = \ev_A \dsum \ev_A^{-1}$. 

In particular, this bicharacter works out to be:
\begin{align*}
\chi_g\left( \left(\ell_1 , k_1\right) , \left(\ell_2 , k_2\right)\right)
&= \lr{ \widetilde{\left(\ell_1 , k_1\right)} , \left(\ell_2 , 1 , k_2 , 1\right) }_{q} \\
&= q \left(\ell_1 + \ell_2 , \phi , k_1 + k_2 , \psi\right) \\
&= \phi\left(\ell_2\right) \psi\left(k_2\right)^{-1}
\ ,
\end{align*}
where $\phi \in \pdual{L}$ is such that 
\begin{equation*}
k_1 = p_3  g\left(\ell_1 , \phi\right) \ ,
\end{equation*}
where $p_3$ denotes the projection from the graph of $g$ to the third factor $L$, and 
\begin{equation*}
\psi \ceqq p_4 g\left(\ell_1 , \phi\right) \ .
\end{equation*}

Recasting this in $\Fam_3\left(B^3 \units{\bk}\right)$, 
there is a correspondence $C_g$ defined for every element of 
$g\in \O\left(L\dsum \pdual{L}\right)$:
\begin{equation}
C_g = 
\begin{tikzcd}
& \left(p_{1,3} \Gamma\left(g\right) , \kappa_g \right) \ar{dl}\ar{dr} & \\
L && L
\end{tikzcd}
\label{eqn:C_g}
\end{equation}
where $\kappa_g$ is a $2$-cocycle representing a class in 
$H^2\left(p_{1,3} \Gamma\left(g\right)\right)$ which corresponds to the alternating
bicharacter $\chi_g$ on $p_{1,3} \Gamma\left(g\right)$ defined above.

Define a functor 
\begin{equation}
f \colon \O\left(L \dsum \pdual{L}\right) \to \pi_0
\Aut_{\Fus}\left(\Vect\left[L\right]\right) 
\label{eqn:f_ENO}
\end{equation}
by sending $g$ to $\Sum_3\left(C_g\right)$. 
Recall (from \Cref{exm:sigma_BG} \Cref{fus,bimodule_cats})
that the value of $\Sum_3$ is given precisely by the 
equivalence given in \cite{ENO:homotopy} between  $\O\left(L\dsum \pdual{L}\right)$ and
the group of isomorphism classes of invertible $\Vect\left[L\right]$-bimodule categories.

\begin{theorem}
The map $f$ \eqref{eqn:f_ENO} sends the order $2$ element $T_\sigma$ to $\FT$
(\Cref{defn:FT}).
\label{thm:ENO}
\end{theorem}

\begin{proof}
It suffices to show that the correspondence $C_g$ in \eqref{eqn:C_g} and $C_{L,L}$ in 
\eqref{eqn:corr_LL2} are equivalent $1$-morphisms in $\Fam_3\left(B^3\units{\bk}\right)$,
i.e. we just have to show they are decorated by the same $2$-cocycle, or equivalently the
same bicharacter. 

As above, the bicharacter $\chi_g$ corresponding to $C_g$ (the image under 
\eqref{eqn:alt_equiv} of $\kappa_g$) is:
\begin{equation*}
\chi_g\left( \left(\ell_1 , k_1\right) , \left(\ell_2 , k_2\right)
\right)= \phi\left(\ell_2\right) \psi\left(k_2\right)  \ .
\end{equation*}
When $g = T_\sigma$, a particular choice of lift is given by
\begin{equation*}
\phi = 
\sigma\left(k_1\right)
\qquad 
\psi = \sigma\left(\ell_1\right)
\end{equation*}
making the formula:
\begin{equation*}
\chi_{T_{\sigma}} \left( \left(\ell_1 , k_1\right) , \left(\ell_2 , k_2\right)
\right)= \sigma\left(k_1\right)\left(\ell_2\right)
\sigma\left(\ell_1\right)\left(k_2\right)^{-1}  \ .
\end{equation*}

On the other hand, recall that the bicharacter defined in 
\eqref{eqn:kappa_L}, corresponding to the cocycle decorating $C_{L,L}$
sends:
\begin{equation*}
\left(\ell_1 , k_1\right) , \left(\ell_2 , k_2\right)
\quad \mapsto \quad 
\sigma\left(k_2\right)\left(\ell_1\right)^{-1}
\sigma\left(k_1\right)\left(\ell_2\right)
 \ .
\end{equation*}
On account of \eqref{eqn:symm} we have 
\begin{equation*}
\sigma\left(k_2\right)\left(\ell_1\right) = \sigma\left(\ell_1\right)\left(k_2\right) \ ,
\end{equation*}
and therefore the bicharacters agree.
\end{proof}

\subsection{Field-theoretic interpretation}
\label{sec:orthogonal_TFT}

As is explained in \cite{VD:3d}, the results of \cite{ENO:homotopy} and
\Cref{prop:sigma_mod} together imply that the group of orthogonal automorphisms of
$A\simeq L\dsum \pdual{L}$ canonically induce projective automorphisms of the theory
$\sigma_{BL}^3$. In other words: 
Twice-categorified geometric quantization of odd-order metric groups is projectively functorial. 

\begin{theorem}
Assume \Cref{hyp:sigma} and that $L$ has odd order.
The canonical projective action of $T_\sigma\in \O\left(L\dsum \pdual{L}\right)$ 
on $\sigma_{BL}^3$ is via the domain wall $\FT$ (\Cref{defn:FT}).
\label{thm:FT}
\end{theorem}

\begin{proof}
Recall that the canonical projective action 
of $\O\left(L\dsum \pdual{L}\right)$ on $\sigma_{BL}^3$ is defined as follows.
In \cite[\S 5.2]{ENO:homotopy} an equivalence 
\begin{equation*}
\Psi \colon 
\Aut_\EqBr\left(\cZ\left(\cC\right)\right)
\lto{\sim} \pi_{\leq 1} \Aut_{\Fus}\left(\cC\right)
\end{equation*}
is provided for any fusion category $\cC$.
When $\cC = \Vect\left[L\right]$ and $\abs{L}$ is odd
there is a canonical splitting
(\cite[\S 6]{EG:reflection} and \cite[\S 5]{CGPW}):
\begin{equation*}
s \colon \O\left(L\dsum \pdual{L} , \ev\right) =
\pi_{0} \Aut_\EqBr\left(\cZ\left(\cC\right)\right)
\to
\pi_{\leq 1} \Aut_\EqBr\left(\cZ\left(\cC\right)\right)
\ .
\end{equation*}
Composing these maps yields a canonical action of $\O\left(L\dsum \pdual{L}\right)$ on 
$\Vect\left[L\right]$ regarded as an object of $\PP\Fus$. 
By the cobordism hypothesis this classifies a projective action of $\O\left(L\dsum
\pdual{L}\right)$ on $\sigma_{BL}^3$. 
Therefore the theorem follows from the cobordism hypothesis and 
\Cref{prop:FT_VectVect}.
\end{proof}

% Fakesection APPENDIX

\appendix
\section{Bicharacters and the Schur multiplier}
\label{sec:schur}

Let $\ext^2\left(-\right)$ denote the group of alternating bicharacters, and let
$Z^2\left(- , \units{\bk}\right)$ denote the group of degree two group cocycles.
Let $A$ be an abelian group. 
There is a map
\begin{equation*}
\alt \colon Z^2\left(A , \units{\bk}\right) \to \ext^2\left(A\right)
\end{equation*}
given by sending a cocycle $\al \in Z^2\left(A\right)$ to the alternating bicharacter
sending:
\begin{equation*}
\left( a_1 , a_2\right) \mapsto \al\left(a_1 , a_2\right) \al\left(a_2 , a_1\right)^{-1} 
\ .
\end{equation*}
It is a well-known fact that this induces an equivalence:
\begin{equation}
\alt \colon H^2\left(A , \units{\bk} \right) \lto{\sim} \ext^2 A \ .
\label{eqn:alt_equiv}
\end{equation}
See e.g. \cite[2.4]{NN:Lag}.

If $A = A_1 \dsum A_2$ is an abelian group, recall the \emph{Schur isomorphism}:
\begin{equation}
H^2\left(A\right) \simeq H^2\left(A_1\right) \dsum H^2\left(A_2\right) \dsum \Hom\left(A_1
\tp_\ZZ A_2, \units{\bk}\right) \ .
\label{eqn:schur}
\end{equation}
See e.g. \cite[Theorem 2.2.10]{Karp:Schur}.

There is an equivalence of $\Vect\left[A_1\dsum A_2\right]$-modules and 
$\Vect\left[A_1\right]$-$\Vect\left[A_2\right]$-bimodules
sending a module to the same underlying category, with bimodule structure:
\begin{equation*}
\bk_\ell \tp \left(-\right) \tp \bk_k = \bk_{\left(\ell , -k\right)}\tp \left(-\right) 
\end{equation*}
where $\ell\in A_1$ and $k\in A_2$.

\begin{lem}
Under this equivalence, $\Vect$ with associator 
$\left[\al\right] \in H^2\left(A_1 \dsum A_2\right)$ gets sent to $\Vect$ with middle associator
\begin{equation*}
\left(\ell , k\right) \mapsto 
\left(\alt \al\right)
\left( \left(\ell , 0\right) , \left(0 , k\right)\right)
\ .
\end{equation*}
\label{lem:mod_bimod}
\end{lem}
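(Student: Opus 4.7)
The plan is to identify the middle associator directly in terms of the module associator $\alpha$ and then massage the resulting cocycle expression into the advertised bicharacter formula.

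First, I will unpack the stated equivalence at the level of actions. Under the rule $\bk_\ell \tp (-) \tp \bk_k = \bk_{(\ell,-k)} \tp (-)$, the natural realization is that the left $\Vect\left[L\right]$-action by $\bk_\ell$ corresponds to the $\Vect\left[L^2\right]$-action by $\bk_{(\ell,0)}$, and the right $\Vect\left[L\right]$-action by $\bk_k$ corresponds to the $\Vect\left[L^2\right]$-action by $\bk_{(0,-k)}$; these two commute up to the middle associator of the bimodule, which is precisely what we want to compute.

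Next, I will compute the middle associator. For $V \in \Vect$, the two natural orderings
\begin{equation*}
(\bk_\ell \tp V) \tp \bk_k = \bk_{(0,-k)} \tp \bigl( \bk_{(\ell,0)} \tp V \bigr), \qquad
\bk_\ell \tp (V \tp \bk_k) = \bk_{(\ell,0)} \tp \bigl( \bk_{(0,-k)} \tp V \bigr)
\end{equation*}
are both identified with $\bk_{(\ell,-k)} \tp V$ via the $\Vect\left[L^2\right]$-module associator, with scaling factors $\alpha((0,-k),(\ell,0))$ and $\alpha((\ell,0),(0,-k))$ respectively. The middle associator is then the composition through $\bk_{(\ell,-k)} \tp V$, i.e.\ multiplication by the ratio
\begin{equation*}
\alpha((\ell,0),(0,-k))^{-1} \cdot \alpha((0,-k),(\ell,0)).
\end{equation*}

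Finally, I will recognize this ratio as $(\alt \alpha)((\ell,0),(0,k))$. By definition of $\alt$, the above expression equals $(\alt \alpha)((\ell,0),(0,-k))^{-1}$, and using that $\alt \alpha$ is a bicharacter (so $\alt\alpha(a,-b) = \alt\alpha(a,b)^{-1}$) this becomes $(\alt \alpha)((\ell,0),(0,k))$ as claimed. Since the answer only depends on $\alpha$ through $\alt \alpha$, this confirms in passing that the construction descends to $H^2(L^2, \units{\bk})$, consistent with \eqref{eqn:alt_equiv}.

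The only real obstacle is bookkeeping: the $(0,-k)$ appearing in the action must be converted to the $(0,k)$ appearing in the statement, and this is where the bicharacter identity $\alt\alpha(a,-b) = \alt\alpha(a,b)^{-1}$ is essential. No other structure beyond the module associator axiom and the definition of $\alt$ enters, so the argument is essentially a direct unwinding once the left/right action dictionary is fixed.
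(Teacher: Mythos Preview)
Your proof is correct and follows essentially the same route as the paper: both identify the middle associator by translating the left/right actions into $\bk_{(\ell,0)}$ and $\bk_{(0,-k)}$ acting on the $\Vect[L^2]$-module, pass through $\bk_{(\ell,-k)}\tp V$ via the module associator, and read off the resulting ratio of $\alpha$-values as $\alt\alpha$. Your write-up is in fact slightly more complete than the paper's, which stops at the composition $\alpha((\ell,0),(0,-k))\cdot\alpha((0,-k),(\ell,0))^{-1}$ without carrying out the final bicharacter step $\alt\alpha(a,-b)=\alt\alpha(a,b)^{-1}$ that converts the $-k$ into the $k$ appearing in the statement.
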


\begin{proof}
Let $\al \in Z^2\left(A_1 \dsum A_2\right)$ be the associator for a $\Vect\left[A_1 \dsum
A_2\right]$-module category with underlying category $\Vect$.
The middle associator for the associated
$\Vect\left[A_1\right]$-$\Vect\left[A_2\right]$-bimodule category is an equivalence, for
all $\ell \in A_1$ and $k\in A_2$ between
\begin{equation*}
\left(\bk_{\ell} \tp -\right) \tp \bk_{k} = 
\bk_{ \left(0 , -k\right) } \tp 
\left( \bk_{ \left(\ell , 0\right)} \tp - \right)
\end{equation*}
and 
\begin{equation*}
\bk_{\ell} \tp 
\left(- \tp \bk_{k} \right)
=
\bk_{ \left(\ell , 0\right) } \tp 
\left( \bk_{ \left(0 , -k\right)} \tp - \right)
\ .
\end{equation*}

Because $A$ is abelian, this is provided by the following composition:
\begin{equation*}
\begin{tikzcd}
\bk_{ \left(0 , -k\right) } \tp 
\left( \bk_{ \left(\ell , 0\right)} \tp - \right)
\ar{d}{\al\left( \left(0 , -k\right) , \left(\ell , 0\right)\right)^{-1}}
\\
\left( \bk_{ \left(0 , -k\right) } \tp 
\bk_{ \left(\ell , 0\right)} \right) \tp \left(-\right)
\ar{d}\\
\left( \bk_{ \left(\ell , 0\right) } \tp 
\bk_{ \left(- , -k\right)} \right) \tp \left(-\right)
\ar{d}{\al\left( \left(\ell , 0\right) , \left(0 , -k\right)\right)}
\\
\bk_{ \left(\ell , 0\right) } \tp 
\left( \bk_{ \left(0 , -k\right)} \tp - \right)
\end{tikzcd}
\end{equation*}
\end{proof}

% Fakesection REFERENCES

\bibliographystyle{amsalpha}
\bibliography{references}

\end{document}